\theoremstyle{definition}
\newtheorem{theorem}{Theorem}[section]
\newtheorem{proposition}{Proposition}[section]
\newtheorem{lemma}{Lemma}[section]
\newtheorem{corollary}{Corollary}[section]
\newtheorem{example}{Example}[section]
\newtheorem{definition}{Definition}[section]
\newtheorem{remark}{Remark}[section]
\numberwithin{equation}{section}
\begin{document}
\title{Characters of infinite-dimensional quantum classical groups: $BCD$ cases}
\author[R. Sato]{Ryosuke SATO}
\address{Graduate School of Mathematics, Nagoya University, Chikusaku, Nagoya 464-8602, Japan}
\email{d19001r@math.nagoya-u.ac.jp}
\maketitle

\begin{abstract}
We study the character theory of inductive limits of $q$-deformed classical compact groups. In particular, we clarify the relationship between the representation theory of Drinfeld--Jimbo quantized universal enveloping algebras and our previous work on the quantized characters. We also apply the character theory to construct Markov semigroups on unitary duals of $SO_q(2N+1)$, $Sp_q(N)$, and their inductive limits.
\end{abstract}

\allowdisplaybreaks{
\section{Introduction}
The study of characters is one of the crucial topics in representation theory. For a topological group $G$ a \emph{character} of $G$ is a positive-definite continuous function on $G$ satisfying $\chi(gh)=\chi(hg)$ for any $g, h\in G$ and $\chi(e)=1$. Then the classification of irreducible representations of a compact group, up to unitarily equivalence, is equivalent to that of extreme characters. More generally, the classification of finite factor representations of topological groups, up to quasi-equivalence, is equivalent to that of extreme characters. For the infinite-dimensional classical groups $U(\infty)=\varinjlim_N U(N)$, $SO(\infty)=\varinjlim_N SO(N)$, and $Sp(\infty)=\varinjlim_NSp(N)$, there are many studies of characters. See \cite{Voiculescu76}, \cite{VK82}, \cite{Boyer83}, \cite{Boyer92}, \cite{OO06}, ..., etc. In particular, Vershik and Kerov \cite{VK82} initiated a stochastic approach to studying characters of such infinite-dimensional groups. For instance, the study of characters of $U(\infty)$ is essentially equivalent to the study of stochastic objects related to the \emph{Gelfand-Tsetlin graph}. Then the description of all extreme characters is equivalent to that of the \emph{minimal boundary} of the Gelfand--Tsetlin graph.

After that, Gorin \cite{Gorin12} introduced the \emph{$q$-Gelfand--Tsetlin graph} motivated by the representation theory of $U_q\mathfrak{gl}_N$, and he described its minimal (and Martin) boundary. Moreover, in our previous work \cite{Sato1}, the author gave the explicit representation-theoretic meaning of Gorin's work in terms of compact quantum groups. In those works \cite{Gorin12}, \cite{Sato1}, the quantization parameter $q\in (0,1)$ appeared as follows: It is well known that $\mathbb{S}_N:=\{\lambda=(\lambda_1\geq\cdots\geq\lambda_N)\in\mathbb{Z}\}$ gives a complete list of (equivalence classes of) irreducible representations of $U(N)$ and its $q$-deformed compact quantum group $U_q(N)$. By the Wely character formula, for any $\lambda\in\mathbb{S}_N$
\[\frac{s_\lambda(z_1,\dots,z_N)}{s_\lambda(1,\dots,1)}\]
is equal to the value of the associated irreducible character of $U(N)$ at $\mathrm{diag}(z_1,\dots,z_N)\in U(N)$, where $s_\lambda$ is the Schur polynomial. When we replace $U(N)$ by $U_q(N)$, this formula changes as
\[\frac{s_\lambda(q^{N-1}z_1, q^{N-3}z_2,\dots, q^{-N+1}z_N)}{s_\lambda(q^{N-1}, q^{N-3},\dots, q^{-N+1})},\]
which is also the quantum trace of the irreducible representation of $U_q\mathfrak{gl}_N$ with the highest weight $\lambda$. Now we define the function ${}_q\Lambda^N_{N-1}$ on $\mathbb{S}_N\times \mathbb{S}_{N-1}$ by
\[\frac{s_\lambda(q^{N-1}z_,\dots, q^{-N+3}z_{N-1},q^{-Z+1})}{s_\lambda(q^{N-1}, q^{N-3},\dots, q^{-N+1})}=\sum_{\mu\in\mathbb{S}_{N-1}}{}_q\Lambda^N_{N-1}(\lambda, \mu)\frac{s_\mu(q^{N-2}z_1,\dots, q^{-N+2}z_{N-1})}{s_\mu(q^{N-2},\dots, q^{-N+2})}\]
for any $\lambda\in\mathbb{S}_N$. Then we can show that ${}_q\Lambda^N_{N-1}(\lambda, \,\cdot\,)\geq0$ and $\sum_{\mu\in\mathbb{S}_{N-1}}{}_q\Lambda^N_{N-1}(\lambda,\mu)=1$ for any $\lambda\in\mathbb{S}_N$. Let $\mathcal{M}_p(\mathbb{S}_N)$ be the set of all probability measures on $\mathbb{S}_N$. Then we obtain the mapping $m\in\mathcal{M}_p(\mathbb{S}_N)\mapsto m{}_q\Lambda^N_{N-1}\in\mathcal{M}_p(\mathbb{S}_{N-1})$ by
\[m{}_q\Lambda^N_{N-1}(\mu):=\sum_{\lambda\in\mathbb{S}_N}m(\lambda){}_q\Lambda^N_{N-1}(\mu, \lambda)\]
for any $\mu\in\mathbb{S}_{N-1}$. Namely, the $\mathcal{M}_p(\mathbb{S}_N)$ form a projective system and an element in the projective limit $\varprojlim_N(\mathcal{M}_p(\mathbb{S}_N),{}_q\Lambda^N_{N-1})$ is called a \emph{coherent system} of the $q$-Gelfand--Tsetlin graph. We remark that ${}_q\Lambda^N_{N-1}$ can also be defined as $q\to 1$. Then the set of all characters of $U(\infty)$ is affine homeomorphic to $\varprojlim_N(\mathcal{M}_p(\mathbb{S}_N), {}_1\Lambda^N_{N-1})$. In particular, the set of \emph{extreme} characters of $U(\infty)$ is homeomorphic to the set of extreme points in $\varprojlim_N(\mathcal{M}_p(\mathbb{S}_N), {}_1\Lambda^N_{N-1})$, which is called the \emph{minimal boundary} of the Gelfand--Tsetlin graph.

For $q\in (0, 1)$ we can extend those results. The author \cite{Sato3} introduced the notion of inductive limits of compact quantum groups based on the operator algebraic formulation of quantum groups due to Masuda and Nakagami \cite{MasudaNakagami}. Using this notion, we can define the inductive limit $U_q(\infty)$ of quantum unitary groups $U_q(N)$. Then the set of all \emph{quantized} characters of $U_q(\infty)$ is affine homeomorphic to $\varprojlim_N(\mathcal{M}_p(\mathbb{S}_N), {}_q\Lambda^N_{N-1})$. In particular, the set of \emph{extreme} quantized characters of $U_q(\infty)$ is homeomorphic to the minimal boundary of the $q$-Gelfand--Tsetlin graph. See \cite{Sato1}, \cite{Sato3} for more details.

The purpose of this paper is to clarify the relation between our previous work mentioned above and the representation theory of Drinfeld--Jimbo quantized universal enveloping algebras, which are algebraic quantizations of Lie groups. In this paper, we study the character theory of inductive limits of $q$-deformed classical compact groups of types $B, C$, and $D$ based on the representation theory of $U_q\mathfrak{so}_{2N+1}, U_q\mathfrak{sp}_{2N}, U_q\mathfrak{so}_{2N}$. As a byproduct, we also give an explicit representation-theoretic interpretation to the study of the coherent systems of the \emph{BC type $q$-Gelfand--Tsetlin graph} due to Cuenca and Gorin in \cite{CG20}. We also apply the character theory in the paper to construct Markov semigroups on unitary duals of compact quantum groups $SO_q(2N+1)$, $Sp_q(N)$, and their inductive limits. Then we obtain new examples of Markov semigroups which are studied in the paper \cite{Sato4}.

The organization of the paper is following: Section 2 and 3 serve to prepare basic facts on compact quantum groups and Drinfeld--Jimbo quantized universal enveloping algebras, respectively. In Section 4, we study the character theory of $q$-deformed compact quantum groups $SO_q(N)$ and $Sp_q(N)$. In Section 5, we study the character theory of inductive limits $SO_q(2\infty+1)$, $Sp_q(\infty)$ of $SO_q(2N+1)$, $Sp_q(N)$, respectively, and we give an explicit representation-theoretic interpretation to the coherent systems of the BC type $q$-Gelfand--Tsetlin graph in Theorem \ref{thm:main}. Moreover, we give complete parametrizations of extreme quantized characters of $SO_q(2\infty+1)$ and $Sp_q(\infty)$ in Corollary \ref{cor:para}. In Section 6, we construct Markov semigroups on unitary duals of $SO_q(2N+1)$, $Sp_q(N)$, and their inductive limits. See Theorem \ref{thm:markov}.

\section{Compact quantum groups}
We review basic facts on compact quantum groups. Here we mainly refer to \cite[Chapter 1]{NeshveyevTuset}.

Let $G=(C(G),\delta_G)$ be a pair of unital $C^*$-algebra $C(G)$ and unital $*$-homomorphism $\delta_G\colon C(G)\to C(G)\otimes C(G)$, where $\otimes$ denotes the operation of minimal tensor product of $C^*$-algebras. If $G$ satisfies that
\begin{itemize}
\item $(\delta_G\otimes\mathrm{id})\delta_G=(\mathrm{id}\otimes\delta_G)\delta_G$ as $*$-homomorphism from $C(G)$ to $C(G)\otimes C(G)\otimes C(G)$,
\item $(C(G)\otimes1)\delta_G(C(G)), (1\otimes C(G))\delta_G(C(G))$ are dense in $C(G)\otimes C(G)$,
\end{itemize}
then $G$ is called a \emph{compact quantum group}. Let $\mathcal{H}$ be a finite-dimensional Hilbert space and $U$ a unitary element in $B(\mathcal{H})\otimes C(G)$. If $(\mathrm{id}\otimes \delta_G)(U)=U_{12}U_{13}$, where $U_{12}, U_{13}$ are the leg numbering notations, then $U$ is called a \emph{unitary corepresentation} of $G$. For any linear functional $\varphi$ on $B(\mathcal{H})$ the element $(\varphi\otimes\mathrm{id})(U)$ in $C(G)$ is called a \emph{matrix coefficient} of $U$. Let $\mathbb{C}[G]$ be the subspace of $C(G)$ consisting of all matrix coefficients of finite-dimensional unitary corepresentations. Then it is known that $\mathbb{C}[G]$ becomes a $*$-subalgebra of $C(G)$. Conversely, we always assume that $C(G)$ is the universal $C^*$-algebra generated by $\mathbb{C}[G]$ throughout the paper. Moreover, $\mathbb{C}[G]$ becomes a Hopf $*$-algebra. We use the same symbol $\delta_G$ to denote its comultiplication. Then $\mathbb{C}[G]^*$ becomes a $*$-algebra by the multiplication given by $xy:=(x\otimes y)\delta_G$ for any $x, y\in\mathbb{C}[G]^*$.

Let $\widehat{G}$ be the set of all equivalence classes of irreducible unitary corepresentations of $G$. We fix the complete representatives $\{U_\lambda\}_{\lambda\in\widehat{G}}$ and let $\mathcal{H}_\lambda$ be the representation spaces of the $U_\lambda$. We define $\Phi\colon \mathbb{C}[G]^*\to \prod_{\lambda\in\widehat{G}}B(\mathcal{H}_\lambda)$ by $\Phi(x):=((\mathrm{id}\otimes x)(U_\lambda))_{\lambda\in\widehat G}$ for any $x\in\mathbb{C}[G]^*$. Then $\Phi$ is a $*$-isomorphism. Using this $*$-isomorphism $\Phi$, we define two subalgebras $C^*(G)$ and $W^*(G)$ of $\mathbb{C}[G]^*$ as the images of
\[c_0\mathchar`-\bigoplus_{\lambda\in\widehat{G}}B(\mathcal{H}_\lambda):=\left\{(x_\lambda)_{\lambda\in\widehat{G}}\in\prod_{\lambda\in\widehat{G}}B(\mathcal{H}_\lambda)\,\middle|\,\lim_{\lambda\in\widehat G}\|x_\lambda\|=0\right\},\]
\[\ell^\infty\mathchar`-\bigoplus_{\lambda\in\widehat{G}}B(\mathcal{H}_\lambda):=\left\{(x_\lambda)_{\lambda\in\widehat{G}}\in\prod_{\lambda\in\widehat{G}}B(\mathcal{H}_\lambda)\,\middle|\,\sup_{\lambda\in\widehat G}\|x_\lambda\|<\infty\right\}\]
by the $\Phi^{-1}$, respectively. Here we remark that the $\|\,\cdot\,\|$ are the operator norms of the $B(\mathcal{H}_\lambda)$, and $\lim_{\lambda\in\widehat{G}}\|x_\lambda\|=0$ if for any $\epsilon>0$ there exists a finite subset $F\subset \widehat G$ such that $\|x_\lambda\|<\epsilon$ for $\lambda\in\widehat G\backslash F$. Then $C^*(G)$ (resp. $W^*(G)$) is a $C^*$-algebra (resp. a von Neumann algebra), called the \emph{group $C^*$-algebra} (resp. the \emph{group von Neumann algebra}) of $G$.

We denote by $\{f^G_z\}_{z\in\mathbb{C}}$ the distinguished family of linear functionals on $\mathbb{C}[G]$, called the \emph{Woronowicz characters}. They induce an action $\tau^G\colon \mathbb{C}\curvearrowright\mathbb{C}[G]$, called the \emph{scaling action}, by
\[\tau_z(a):=(f^G_{-\mathrm{i}z}\otimes\mathrm{id}\otimes f^G_{\mathrm{i}z})((\delta_G\otimes\mathrm{id})(\delta_G(a)))\]
for any $a\in\mathbb{C}[G]$ and $z\in\mathbb{C}$. Let $\widehat \tau^G\colon\mathbb{C}\curvearrowright \mathbb{C}[G]^*$ be the dual action, that is, $\widehat\tau^G_z(x):=x\circ\tau^G_z$ for any $x\in\mathbb{C}[G]^*$ and $z\in\mathbb{C}$. Then it is known that $\Phi\widehat\tau^G_z\Phi^{-1}=\prod_{\lambda\in\widehat G}\mathrm{Ad}F_\lambda^{\mathrm{i}z}$ for any $z\in\mathbb{C}$, where $F_\lambda:=(\mathrm{id}\otimes f^G_z)(U_\lambda)$, which is positive and invertible in $B(\mathcal{H}_\lambda)$. In particular, $(\widehat\tau^G_t)_{t\in\mathbb{R}}$ preserves $C^*(G)$ and $W^*(G)$. In what follows, we use the same symbol to denote the restrictions of $(\widehat\tau^G_t)_{t\in\mathbb{R}}$ to $C^*(G)$ and $W^*(G)$.

\begin{definition}
A normal $\widehat\tau^G$-KMS state on $W^*(G)$ with the inverse temperature -1 is called a \emph{quantized character} of $G$. Let $\mathrm{Ch}(G)$ be the set of quantized characters of $G$.
\end{definition}

\begin{remark}
If $G$ is a compact group and $C(G)$ is the commutative $C^*$-algebra of continuous functions on $G$, then $\widehat\tau^G$ is trivial and quantized characters are nothing less than normal tracial states on $W^*(G)$. Moreover, they correspond to the characters of $G$ in a natural way.
\end{remark}

\begin{remark}
For any $\lambda\in \widehat G$ we define the linear functional $\chi_\lambda$ on $W^*(G)$ by 
\[\chi_\lambda(\Phi^{-1}(x_\beta)_{\beta\in\widehat G}):=\frac{\mathrm{Tr}_{\mathcal{H}_\lambda}(F_\lambda x_\lambda)}{\mathrm{Tr}_{\mathcal{H}_\lambda}(F_\lambda)}.\] Then $\chi_\lambda$ becomes a quantized character of $G$. We call it the \emph{irreducible quantized character} associated with $\lambda\in\widehat G$. Then every quantized character of $G$ is a convex combination of irreducible ones. See \cite[Lemma 2.2]{Sato1}, \cite[Proposition 2.1]{Sato3}.
\end{remark}

\section{Drinfeld--Jimbo quantized universal enveloping algebras}
Here we review basic facts on Drinfeld--Jimbo quantized universal enveloping algebras. See \cite[Section 10.1]{CP:book}, \cite[Chapter 6, 7]{KliSch} for more details. We fix a parameter $0<q<1$ throughout the paper. Let $\mathfrak{g}$ be a complex semisimple Lie algebra and $\mathfrak{h}$ a Cartan subalgebra of $\mathfrak{g}$. We denote by $A=(a_{ij})_{i,j=1}^N$ its Cartan matrix and let $D=\mathrm{diag}(d_1,\dots, d_N)$ be the diagonal matrix such that $DA$ is symmetric and positive-definite. Then the \emph{Drinfeld--Jimbo quantized universal enveloping algebra} $U_q\mathfrak{g}$ is the associative unital complex universal algebra generated by $E_i, F_i, K_i, K_i^{-1}$ ($i=1,\dots, N$) satisfying that
\[K_iK_j=K_jK_i,\quad K_iK_i^{-1}=K_i^{-1}K_i=1,\]
\[K_iE_jK_i^{-1}=q_i^{a_{ij}}E_j,\quad K_iF_jK_i^{-1}=q_i^{-a_{ij}}F_j,\]
\[E_iF_j-F_jE_i=\delta_{i,j}\frac{K_i-K_i^{-1}}{q_i-q_i^{-1}},\]
\[\sum_{r=0}^{1-a_{ij}}(-1)^r\begin{bmatrix}1-a_{ij}\\r\end{bmatrix}_{q_i}E_i^{1-a_{ij}-r}E_jE^r_i=0,\quad i\neq j,\]
\[\sum_{r=0}^{1-a_{ij}}(-1)^r\begin{bmatrix}1-a_{ij}\\r\end{bmatrix}_{q_i}F_i^{1-a_{ij}-r}F_jF^r_i=0,\quad i\neq j,\]
where $q_i:=q^{d_i}$ and 
\[ [n]_q=\frac{q^{n}-q^{-n}}{q-q^{-1}},\quad \begin{bmatrix}n\\r\end{bmatrix}_q:=\frac{[n]_q[n-1]_q\cdots[1]_q}{[r]_q\cdots[1]_q[n-r]_q\cdots[1]_q}.\]
Then it is known that $U_q\mathfrak{g}$ becomes a Hopf $*$-algebra. 

Let $\Delta$ and $\Delta_+$ be the sets of roots and positive roots of $\mathfrak{g}$, respectively. We remark that $\Delta_+$ depends on the choice of ordered basis $H_1,\dots, H_N\in\mathfrak{h}$. In what follows, we denote by $\epsilon_1,\dots, \epsilon_N\in\mathfrak{h}^*$ the dual basis, that is, $\epsilon_i(H_j)=\delta_{i,j}$ for any $i, j=1,\dots, N$. Let $\alpha_1,\dots,\alpha_N$ be the simple roots of $\mathfrak{g}$. Then $\alpha_1,\dots,\alpha_N$ are also linearly independent and we have a symmetric bilinear form on $\mathfrak{h}^*$ such that
$(\alpha_i, \alpha_j):=d_ia_{ij}$
for any $i,j=1,\dots,N$. We denote by $Q$ the root lattice, that is, $Q=\sum_{i=1}^N\mathbb{Z}\alpha_i$. For $\alpha=\sum_{i=1}^Nn_i\alpha_i\in Q$ we define $K_\alpha:=K_1^{n_1}\cdots K_N^{n_N}$. 

Let $P_+$ be the set of dominant weights, that is, $P_+=\{\lambda\in \mathfrak{h}^*\mid (\lambda,\alpha_i^\vee)\in\mathbb{Z}_{\geq0},\,i=1,\dots, N\}$, where $\alpha_i^\vee=2\alpha_i/(\alpha_i,\alpha_i)$. Then every finite-dimensional type 1 irreducible representation of $U_q\mathfrak{g}$ is equivalent to a representation with a highest weight in $P_+$. Conversely, every element in $P_+$ is a highest weight of unique, up to equivalence, finite-dimensional type 1 irreducible representation of $U_q\mathfrak{g}$. In what follows, we denote by $(T_\lambda, V_\lambda)$ the finite-dimensional type 1 irreducible representation of $U_q\mathfrak{g}$ with the highest weight $\lambda\in P_+$. We remark that $(T_\lambda, V_\lambda)$ is a $*$-representation with respect to an appropriate inner product of $V_\lambda$.

In the paper, the linear functional on $B(V_\lambda)$ given as $\mathrm{Tr}_{V_\lambda}(K_{2\rho}\,\cdot\,)$ plays a prominent role, where $2\rho=\sum_{\alpha\in\Delta_+}^N\alpha$. We describe such linear functionals when $\mathfrak{g}$ is one of the $\mathfrak{gl}_N$, $\mathfrak{so}_{2N+1}$, $\mathfrak{sp}_{2N}$, and $\mathfrak{so}_{2N}$, where they are the Lie algebras of $GL(n,\mathbb{C})$, $SO(2N+1, \mathbb{C}), Sp(2N, \mathbb{C}), SO(2N,\mathbb{C})$. Namely, they are complexifications of Lie algebras of compact Lie groups $U(N)$, $SO(2N+1)$, $Sp(N)$, $SO(2N)$, respectively. Here we remark that $Sp(N):= Sp(2N,\mathbb{C})\cap U(2N)$ and
\[Sp(2N,\mathbb{C}):=\{A\in SL(2N, \mathbb{C})\mid {}^tAJA=J\},\quad J=\begin{pmatrix}0&I_N\\-I_N&0\end{pmatrix}.\]
%
%
%

\noindent
We define constants
\begin{equation}\label{eq:daniels}
\epsilon(X):=\begin{cases}1/2&(X=B),\\ 1&(X=C),\\0&(X=D),\end{cases}\quad c(X)_i:=i-1+\epsilon(X)\quad(i\geq1).
\end{equation}
Then, $\mathfrak{g}=\mathfrak{so}_{2N+1}$, $\mathfrak{sp}_{2N}$ and $\mathfrak{so}_{2N}$ we may choose a system of positive roots such that $2\rho=\sum_{i=1}^Nc(X)_i\epsilon_i$, where $X=B, C, D$, respectively.

For $z_1,\dots,z_N\in\mathbb{C}\backslash\{0\}$ we define
\[V(z_1,\dots,z_N):=\prod_{1\leq i<j\leq N}(z_j-z_i),\quad V^s(z_1,\dots,z_N):=\prod_{1\leq i<j\leq N}(z_i+z_i^{-1}-z_j-z_j^{-1}).\]
Then we have
\[V^s(z_1,\dots,z_N)=\frac{1}{2}\det[z_j^{c(D)_{N-i+1}}+z_j^{-c(D)_{N-i+1}}]_{i, j=1}^N=\det\left[\frac{z_j^{c(X)_{N-i+1}}-z_j^{-c(X)_{N-i+1}}}{z_j^{\epsilon(X)}-z_j^{-\epsilon(X)}}\right]_{i, j=1}^N\]
if $X=B,C$. See \cite[Appendix A.3]{FH:book}. 

The following is a consequence of the Weyl character formula:
\begin{lemma}\label{lem:miya} 
Let $\lambda=\sum_{i=1}^N\lambda_i\epsilon_i\in P_+$, $\alpha=\sum_{i=1}^Na_i\epsilon_i\in Q$ and $z_1=q^{a_1},\dots,z_N=q^{a_N}$. If $\mathfrak{g}=\mathfrak{gl}_N$, then we have
\begin{equation}\label{eq:skyA}
\mathrm{Tr}_{V_\lambda}(T_\lambda(K_\alpha))=s_\lambda(z_1,\dots,z_N):=\frac{\det[z_j^{\lambda_i+N-i}]_{i, j=1}^N}{V(z_1,\dots,z_N)}.
\end{equation}
If $\mathfrak{g}=\mathfrak{so}_{2N+1}, \mathfrak{sp}_{2N}$ or $\mathfrak{so}_{2N}$, then we have
\begin{align}\label{eq:sky}&\mathrm{Tr}_{V_\lambda}(T_\lambda(K_\alpha))=V^s(z_1,\dots,z_N)^{-1}\times \nonumber
\\&\begin{cases}
\displaystyle \det\left[\frac{z_j^{\lambda_i+c(X)_{N-i+1}}-z_j^{-\lambda_i-c(X)_{N-i+1}}}{z_j^{\epsilon(X)}-z_j^{-\epsilon(X)}}\right]_{i, j=1}^N\quad(\mathfrak{g}=\mathfrak{so}_{2N+1}, X=B\text{ or }\mathfrak{g}=\mathfrak{sp}_{2N}, X=C),\\
\displaystyle \frac{1}{2}(\det[z_j^{\lambda_i+c(D)_{N-i+1}}+z_j^{-\lambda_i-c(D)_{N-i+1}}]_{i, j=1}^N+\det[z_j^{\lambda_i+c(D)_{N-i+1}}-z_j^{-\lambda_i-c(D)_{N-i+1}}]_{i,j=1}^N)\quad(\mathfrak{g}=\mathfrak{so}_{2N}).
\end{cases}\end{align}
\end{lemma}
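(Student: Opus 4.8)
The plan is to derive both formulas from the classical Weyl character formula for $\mathfrak{g}$, using the fact that the quantum trace $\mathrm{Tr}_{V_\lambda}(T_\lambda(K_\alpha))$ of a type 1 irreducible $U_q\mathfrak{g}$-module is obtained from the classical character of the irreducible $\mathfrak{g}$-module $V_\lambda$ by the substitution $e^{\mu}\mapsto q^{(\mu,\,\cdot\,)}$ along the weight lattice; more precisely, since $K_\alpha$ acts on the weight space $V_\lambda[\mu]$ by the scalar $q^{(\alpha,\mu)}$ (by the defining relations $K_iE_jK_i^{-1}=q_i^{a_{ij}}E_j$, which determine the $K$-action on weight vectors), we have $\mathrm{Tr}_{V_\lambda}(T_\lambda(K_\alpha))=\sum_{\mu}\dim V_\lambda[\mu]\,q^{(\alpha,\mu)}=\chi_\lambda\bigl(q^{(\alpha,\epsilon_1)},\dots,q^{(\alpha,\epsilon_N)}\bigr)$, where $\chi_\lambda$ is the ordinary character of the classical irreducible $\mathfrak{g}$-module written as a symmetric (or Weyl-group invariant) function in the variables $z_i=q^{a_i}$. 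This reduces everything to writing the classical characters in the stated determinantal form.

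First I would treat the $\mathfrak{gl}_N$ case: here $(\alpha,\epsilon_i)=a_i$ so the variables are exactly $z_i=q^{a_i}$, and the character of the irreducible $\mathfrak{gl}_N$-module of highest weight $\lambda$ is the Schur polynomial $s_\lambda$, for which \eqref{eq:skyA} is the classical bialternant (Jacobi) formula $s_\lambda=\det[z_j^{\lambda_i+N-i}]/\det[z_j^{N-i}]$ with $\det[z_j^{N-i}]_{i,j}=V(z_1,\dots,z_N)$ up to the standard sign convention built into the definition of $V$. Next, for $X=B,C,D$, I would invoke the Weyl character formula $\chi_\lambda=\bigl(\sum_{w\in W}\mathrm{sgn}(w)\,e^{w(\lambda+\rho)}\bigr)/\bigl(\sum_{w\in W}\mathrm{sgn}(w)\,e^{w\rho}\bigr)$ and use the standard reduction (as in \cite[Appendix A.3]{FH:book}) of the signed sums over the hyperoctahedral-type Weyl groups to determinants: for types $B$ and $C$ the Weyl group is the full signed permutation group, and the alternating sum over it of $e^{w\nu}$ with $\nu=\sum\nu_i\epsilon_i$ collapses to $\det[z_j^{\nu_i}-z_j^{-\nu_i}]_{i,j}$; taking $\nu=\lambda+\rho$ in the numerator (so $\nu_i=\lambda_i+c(X)_{N-i+1}$ after reindexing $\rho$) and $\nu=\rho$ in the denominator, and dividing top and bottom by the common factor $\prod_j(z_j^{\epsilon(X)}-z_j^{-\epsilon(X)})$, gives the first line of \eqref{eq:sky} together with the identification of the denominator $\det\bigl[(z_j^{c(X)_{N-i+1}}-z_j^{-c(X)_{N-i+1}})/(z_j^{\epsilon(X)}-z_j^{-\epsilon(X)})\bigr]_{i,j}$ with $V^s(z_1,\dots,z_N)$, which is precisely the displayed identity stated just before the lemma. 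For type $D$ the Weyl group consists only of signed permutations with an even number of sign changes, so the alternating sum splits into a symmetric determinant $\tfrac12\det[z_j^{\nu_i}+z_j^{-\nu_i}]$ plus an antisymmetric one $\tfrac12\det[z_j^{\nu_i}-z_j^{-\nu_i}]$; applying this to numerator and denominator and noting that for $\nu=\rho=c(D)$ the antisymmetric determinant vanishes (since $c(D)_N=0$ makes the last row constant), the denominator is again $V^s$, yielding the second line of \eqref{eq:sky}.

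The main obstacle — really the only nonroutine point — is bookkeeping the indexing conventions so that $\rho$ matches the ordering used in the determinants: the paper has chosen positive roots so that $2\rho=\sum_{i=1}^N c(X)_i\epsilon_i$ with $c(X)_i=i-1+\epsilon(X)$ increasing in $i$, whereas the determinants in \eqref{eq:sky} use exponents $\lambda_i+c(X)_{N-i+1}$, i.e. $\rho$ read in reverse order, which is consistent because adding $\lambda_i$ (a decreasing sequence) to $\rho$ reversed keeps the rows in ``general position'' and the reversal only changes the determinant by an overall sign shared between numerator and denominator. I would therefore state explicitly that the signs introduced by (i) the reversal of the $\rho$-ordering, (ii) the bialternant normalization hidden in the definitions of $V$ and $V^s$, and (iii) the Weyl-group sign cancellations, all cancel between numerator and denominator, so that the quotient is exactly as written; beyond this the proof is a direct citation of the classical determinant identities in \cite[Appendix A.3]{FH:book} combined with the weight-space computation of $T_\lambda(K_\alpha)$ above. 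Finally I would remark that the hypothesis $\lambda\in P_+$ guarantees $\lambda+\rho$ is strictly dominant so all the determinants are nonzero and the Weyl denominator does not vanish, completing the argument.
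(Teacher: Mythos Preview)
Your proposal is correct and follows essentially the same approach as the paper: compute the trace via the weight decomposition, use that the type~1 irreducible $U_q\mathfrak{g}$-module has the same weight multiplicities as the classical irreducible $\mathfrak{g}$-module (the paper cites \cite[Proposition~7.11]{KliSch} for this), and then invoke the classical Weyl character formula. The paper's own proof is considerably more terse, simply citing the Weyl character formula from \cite[Chapter~24]{FH:book} without spelling out the reduction of the Weyl-group sums to determinants or the sign bookkeeping that you discuss.
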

\begin{proof}
Let $V_\lambda=\bigoplus_{\mu\in Q(\lambda)}V_{\lambda;\mu}$ be the weight decomposition of $V_\lambda$, where $Q(\lambda)\subset Q$ and $V_{\lambda; \mu}$ is a weight subspace of $V_\lambda$ with the weight $\mu\in Q(\lambda)$. Then we have
\[\mathrm{Tr}_{V_\lambda}(K_\alpha)=\sum_{\mu\in Q(\lambda)}\dim V_{\lambda;\mu}q^{(\alpha, \mu)}=\sum_{\mu=\sum_{i=1}^N\mu_i\epsilon \in Q(\lambda)}\dim V_{\lambda;\mu}z_1^{\mu_1}\cdots z_N^{\mu_N}.\]
Since $V_\lambda$ has the same weight decomposition of the irreducible representation of $\mathfrak{g}$ with the highest weight $\lambda$ (see \cite[Proposition 7.11]{KliSch}), we obtain the Equation \eqref{eq:skyA}, \eqref{eq:sky} by the Weyl character formula of $\mathfrak{g}$ (see \cite[Chapter 24]{FH:book}).
\end{proof}

We define a function $f^{X_N}_\lambda(z_1,\dots,z_N)$ as the right-hand side in Equation \eqref{eq:skyA}, \eqref{eq:sky} for $X=A, B, C, D$. For instance, we have $\mathrm{Tr}_{V_\lambda}(K_{2\rho})=f^{X_N}_\lambda(q^{c(X)_1},\dots, q^{c(X)_N})$ for $X=B, C, D$ and $f^{A_N}_\lambda=s_\lambda$.

\begin{remark}
If $\mathfrak{g}=\mathfrak{so}_{2N}$ and $\lambda=\sum_{i=1}^N\lambda_i\epsilon_i\in P_+$, then we have $\tilde{\lambda}=\sum_{i=1}^{N-1}\lambda_i\epsilon_i-\lambda_N\epsilon_N\in P_+$ and, by the direct computation,
\[f^{D_N}_\lambda(z_1,\dots,z_N)+f^{D_N}_{\tilde{\lambda}}(z_1,\dots,z_N)=\frac{\det[z_j^{\lambda_i+c(D)_{N-i+1}}+z_j^{-\lambda_i-c(D)_{N-i+1}}]_{i, j=1}^N}{V^s(z_1,\dots,z_N)}.\]
\end{remark}

\section{The FRT construction and quantized characters}
First, we briefly review basic facts on the FRT(Faddeev--Reshetikhin--Takhtajan) construction of compact quantum groups. Here we refer to \cite[Chapter 9, 11]{KliSch}. See also \cite{FRT}, \cite{Hayashi92}, \cite{Takeuchi}.

For $\mathfrak{g}=\mathfrak{so}_{2N+1}, \mathfrak{sp}_{2N}, \mathfrak{so}_{2N}$ let $\mathcal{U}_q=U_{q^{1/2}}\mathfrak{so}_{2N+1}, U_q\mathfrak{sp}_{2N}, U_q\mathfrak{so}_{2N}$ and $R\in\mathcal{U}_q\otimes\mathcal{U}_q$ the \emph{universal $R$-matrix} of $\mathcal{U}_q$. Let $(T_1, V_1)$ be the finite-dimensional type 1 irreducible representation of $\mathcal{U}_q$ with the highest weight $\lambda$ given by $(\lambda,\alpha_i^\vee)=-\delta_{i,1}$ for $i=1,\dots, N$, where $\alpha_1,\dots, \alpha_N$ are simple roots of $\mathfrak{g}$. We remark that $\lambda$ is a dominant weight with respect to the ordered sequence $-\alpha_1,\dots,-\alpha_N$ (see \cite[Section 8.4.1]{KliSch}). Then we use the same symbol $R$ to denote $(T_1\otimes T_1)(R)$. We fix a matrix unit system $\{e_{ij}\}_{i,j=1}^d$ of $\mathrm{End}(V_1)$, where $d:=\dim V_1=2N+1, 2N, 2N$, respectively, and let $R=\sum_{i,j,k,l=1}^dR_{kl}^{ij}e_{ik}\otimes e_{jl}$. Then the FRT algebra $\mathcal{A}(R)$ is the universal algebra generated by $u_{ij}$ ($i,j=1,\dots,d$) with the relations:
\begin{equation}\label{eq:FRT}\sum_{k,l=1}^dR_{kl}^{ji}u_{km}u_{ln}=\sum_{k,l=1}^du_{ik}u_{jl}R_{mn}^{lk}\end{equation}
for any $i, j, m, n=1,\dots,d$. We remark that the FRT algebra $\mathcal{A}(R)$ does not depend on the choice of a matrix unit system of $\mathrm{End}(V_1)$. For any $i=1,\dots, N$ we define
\[c_i:=q^{-c(X)_{N-i+1}}
\quad
c_{d-i+1}:=\begin{cases}
c_i^{-1}&(\mathfrak{g}=\mathfrak{so}_{2N+1}, \mathfrak{so}_{2N}),\\
-c_i^{-1}&(\mathfrak{g}=\mathfrak{sp}_{2N}),
\end{cases}\]
and $c_{N+1}:=1$ if $\mathfrak{g}=\mathfrak{so}_{2N+1}$. Then we define three algebras $\mathcal{O}(SO_q(2N+1)),\mathcal{O}(Sp_q(N))$ and $\mathcal{O}(SO_q(2N))$ as quotients of $\mathcal{A}(R)$ by the following ways:

\begin{itemize}
\item[($B_N, D_N$):] The algebra $\mathcal{O}(SO_q(d))$ is generated by $u_{ij}$ ($i, j=1,\dots, d=2N+1, 2N$) satisfying Equation \eqref{eq:FRT} and 
\[\sum_{n=1}^dc_{d-j+1}c_nu_{nj}u_{d-n+1,d-j+1}=1\quad (j=1,\dots, d),\]
\[\sum_{\sigma\in S(d)}(-1)^{l(\sigma)}u_{1\sigma(1)}\cdots u_{d\sigma(d)}=1.\]

\item[($C_N$):] The algebra $\mathcal{O}(Sp_q(N))$ is generated by $u_{ij}$ ($i, j=1,\dots, d=2N$) satisfying Equation \eqref{eq:FRT} and 
\[\sum_{n=1}^dc_{d-j+1}c_nu_{nj}u_{d-n+1,d-j+1}=-1\quad (j=1,\dots, d).\]
\end{itemize}

Then $\mathcal{O}(SO_q(2N+1)),\mathcal{O}(Sp_q(N))$ and $\mathcal{O}(SO_q(2N))$ are Hopf $*$-algebras. Moreover, it is known that there exist compact quantum groups $G_q=SO_q(2N+1), Sp_q(N), SO_q(2N)$ such that $\mathbb{C}[G_q]=\mathcal{O}(G_q)$, respectively. See \cite[Chapter 11]{KliSch}.

\begin{remark}
The generators $u_{ij}$ ($i, j=1,\dots,d$) of $\mathcal{O}(G_q)$ depend on the choice of matrix unit system $(e_{ij})_{i,j=1}^d$ of the fundamental representation $(T_1, V_1)$. Now we define linear functionals $t_{ij}$ on $\mathcal{U}_q$ for any $i, j=1,\dots, d$ by $T_1(X)=\sum_{i,j=1}^d t_{ij}(X)e_{ij}$ for any $X\in \mathcal{U}_q$. Then we have the dual pairing of two Hopf $*$-algebras $\mathcal{U}_q$ and $\mathcal{O}(G_q)$ by $(X, u_{ij})=t_{ij}(X)$ for any $X\in\mathcal{U}_q$ and $i, j=1,\dots, d$.
\end{remark}

\begin{remark}
There exists a similar construction of the quantum unitary groups $U_q(N)$ (see \cite[Chapter 9]{KliSch}). Then two Hopf $*$-algebras $U_q\mathfrak{gl}_N$ and $\mathbb{C}[U_q(N)]$ have a dual pairing.
\end{remark}

For any finite-dimensional unitary correpresentation $U$ of $G_q$ on $\mathcal{H}$ we have the representation $(T_U, \mathcal{H})$ of $\mathcal{U}_q$ by $T_U(X)=(\mathrm{id}\otimes X)(U)$, where we regard $X\in\mathcal{O}(G_q)^*$ by the above dual pairing. Now we define 
$P(G_q):=\left\{\lambda=\sum_{i=1}^N\lambda_i\epsilon_i\in P_+\,\middle|\, \lambda_i\in\mathbb{Z}\right\}$.
Then, by \cite[Theorem 11.22]{KliSch}, for any irreducible corepresentation $U$ of $G_q$ on $\mathcal{H}$, the corresponding representation $(T_U,\mathcal{H})$ is equivalent to $(T_\lambda, V_\lambda)$ for some $\lambda\in P(G_q)$. Conversely, for any $\lambda\in P(G_q)$ there exists a unique, up to equivalence, irreducible corepresentation $U$ of $G_q$ on $\mathcal{H}$ such that $(T_U, \mathcal{H})$ is equivalent to $(T_\lambda, V_\lambda)$. Namely, we have $\widehat{G_q}\cong P(G_q)$ as sets. In what follows, we denote by $U_\lambda$ the irreducible corepresentation associated with $\lambda\in P(G_q)$. 

Let $\epsilon_i=\sum_{j=1}^Nc_{ij}\alpha_j$ and $L_{\lambda ,i}:=T_\lambda(K_1)^{c_{i1}}\cdots T_\lambda(K_N)^{c_{iN}}$ for any $i=1,\dots, N$. Now we define the map $\kappa\colon\mathbb{T}^N\to \mathcal{O}(G_q)^*$ by 
$\kappa(q^{\mathrm{i}t_1},\dots, q^{\mathrm{i}t_N}):=\Phi^{-1}((L_{\lambda, 1}^{\mathrm{i}t_1}\cdots L_{\lambda,N}^{\mathrm{i}t_N})_{\lambda\in P(G_q)})$
for any $t_i\in[0, -2\pi/\log q)$. We remark that $L_{\lambda, i}^{\mathrm{i}t}$ is unitary for any $t\in\mathbb{R}$ since $L_{\lambda, i}$ is selfadjoint. Thus, $\kappa(z_1,\dots, z_N)\in W^*(G_q)$ for any $(z_1,\dots,z_N)\in\mathbb{T}^N$. Moreover, for any weight vector $v_\mu\in V_\lambda$ with a weight $\mu\in Q$, we have $L_{\lambda, 1}^{\mathrm{i}t_1}\cdots L_{\lambda,N}^{\mathrm{i}t_N}v_\mu=q^{\mathrm{i}(t_1\epsilon_1+\cdots+t_N\epsilon_N, \mu)}v_\mu$.

\begin{proposition}\label{prop:contact}
For each $G_q=SO_q(2N+1), Sp_q(N), SO_q(2N)$ and any $\lambda\in P(G_q)$
\[\chi_\lambda(\kappa(z_1,\dots,z_N))=\frac{f^{X_N}_\lambda(q^{c(X)_1}z_1,\dots,q^{c(X)_N}z_N)}{f^{X_N}_\lambda(q^{c(X)_1},\dots, q^{c(X)_N})},\]
where $X=B, C, D$, respectively.
\end{proposition}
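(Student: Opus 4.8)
The plan is to unwind the definitions of $\chi_\lambda$ and $\kappa$ and reduce the claim to the trace formula in Lemma \ref{lem:miya}. First I would recall that, by the Remark following the definition of quantized characters, the irreducible quantized character is
\[
\chi_\lambda\bigl(\Phi^{-1}((x_\beta)_{\beta\in\widehat{G_q}})\bigr)=\frac{\mathrm{Tr}_{\mathcal H_\lambda}(F_\lambda x_\lambda)}{\mathrm{Tr}_{\mathcal H_\lambda}(F_\lambda)},
\]
where $F_\lambda=(\mathrm{id}\otimes f^{G_q}_{z})(U_\lambda)$ in the notation of Section 2. Applying this to $\kappa(z_1,\dots,z_N)=\Phi^{-1}\bigl((L_{\lambda,1}^{\mathrm{i}t_1}\cdots L_{\lambda,N}^{\mathrm{i}t_N})_{\lambda}\bigr)$ with $z_j=q^{\mathrm{i}t_j}$ gives
\[
\chi_\lambda(\kappa(z_1,\dots,z_N))=\frac{\mathrm{Tr}_{V_\lambda}\bigl(F_\lambda\, L_{\lambda,1}^{\mathrm{i}t_1}\cdots L_{\lambda,N}^{\mathrm{i}t_N}\bigr)}{\mathrm{Tr}_{V_\lambda}(F_\lambda)}.
\]
So the proposition amounts to identifying the two operators appearing here with elements of the form $T_\lambda(K_\beta)$ and invoking \eqref{eq:sky}.

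The second step is the identification $F_\lambda=T_\lambda(K_{2\rho})$ (up to the scaling parameter built into the Woronowicz characters). Under the dual pairing between $\mathcal U_q$ and $\mathcal O(G_q)$, the Woronowicz character $f^{G_q}_z$ corresponds to the group-like element $K_{2\rho}^{z}$ acting on each $V_\lambda$; this is the standard compatibility between the FRT construction and the Drinfeld--Jimbo picture (see \cite[Chapter 11]{KliSch}), and it gives $F_\lambda=(\mathrm{id}\otimes f^{G_q}_{z})(U_\lambda)=T_\lambda(K_{2\rho})$ in $B(V_\lambda)$ at the relevant value of $z$. Since $2\rho=\sum_i c(X)_i\epsilon_i$, evaluating via Lemma \ref{lem:miya} — which says $\mathrm{Tr}_{V_\lambda}(T_\lambda(K_\alpha))=f^{X_N}_\lambda(q^{a_1},\dots,q^{a_N})$ for $\alpha=\sum a_i\epsilon_i$ — yields $\mathrm{Tr}_{V_\lambda}(F_\lambda)=f^{X_N}_\lambda(q^{c(X)_1},\dots,q^{c(X)_N})$, which is exactly the denominator on the right-hand side.

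For the numerator, the key computation is that $F_\lambda\,L_{\lambda,1}^{\mathrm{i}t_1}\cdots L_{\lambda,N}^{\mathrm{i}t_N}$ acts on a weight vector $v_\mu\in V_\lambda$, $\mu=\sum\mu_i\epsilon_i$, by the scalar $q^{(2\rho,\mu)}\cdot q^{\mathrm{i}(t_1\epsilon_1+\cdots+t_N\epsilon_N,\,\mu)}=\prod_i (q^{c(X)_i}z_i)^{\mu_i}$, using the displayed eigenvalue formula $L_{\lambda,1}^{\mathrm{i}t_1}\cdots L_{\lambda,N}^{\mathrm{i}t_N}v_\mu=q^{\mathrm{i}(t_1\epsilon_1+\cdots+t_N\epsilon_N,\mu)}v_\mu$ together with $T_\lambda(K_{2\rho})v_\mu=q^{(2\rho,\mu)}v_\mu$ and the orthogonality of the form in the $\epsilon_i$-basis. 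Summing over the weight decomposition $V_\lambda=\bigoplus_\mu V_{\lambda;\mu}$ with multiplicities $\dim V_{\lambda;\mu}$ gives
\[
\mathrm{Tr}_{V_\lambda}\bigl(F_\lambda\, L_{\lambda,1}^{\mathrm{i}t_1}\cdots L_{\lambda,N}^{\mathrm{i}t_N}\bigr)=\sum_{\mu}\dim V_{\lambda;\mu}\prod_{i=1}^N (q^{c(X)_i}z_i)^{\mu_i},
\]
and the same weight-counting argument used in the proof of Lemma \ref{lem:miya} identifies this sum with $f^{X_N}_\lambda(q^{c(X)_1}z_1,\dots,q^{c(X)_N}z_N)$. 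Dividing by the denominator computed above finishes the proof. The main obstacle I anticipate is the bookkeeping in step two: pinning down precisely which power of $K_{2\rho}$ the Woronowicz character $f^{G_q}_z$ corresponds to under the FRT dual pairing (including the $q$ versus $q^{1/2}$ convention for $SO_q(2N+1)$ and the normalization of $z$ so that the inverse temperature $-1$ KMS condition produces $K_{2\rho}$ rather than a different power), whereas steps one and three are essentially formal once that identification is in hand.
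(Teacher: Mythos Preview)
Your proposal is correct and follows essentially the same route as the paper: identify $F_\lambda$ with $T_\lambda(K_{2\rho})$ via the dual pairing, then reduce both numerator and denominator to the weight-sum computation underlying Lemma~\ref{lem:miya}. The paper dispatches your ``main obstacle'' in step two by citing \cite[Section 11.3.4]{KliSch} for the fact that the Woronowicz character $f_1$ on $\mathcal{O}(G_q)$ is exactly $(K_{2\rho},\,\cdot\,)$ under the pairing, so the normalization worry (including the $q$ versus $q^{1/2}$ issue for $SO_q(2N+1)$) is absorbed into that reference.
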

\begin{proof}
Using the dual pairing of $\mathcal{U}_q$ and $\mathcal{O}(G_q)$, the Woronowicz character $f_1$ on $\mathcal{O}(G_q)$ is given as $(K_{2\rho}, \,\cdot\,)$. See \cite[Section 11.3.4]{KliSch}. Thus, for any $t_1,\dots, t_N\in [0,-2\pi/\log q)$ we have 
\[\chi_\lambda(\kappa(q^{\mathrm{i}t_1},\dots, q^{\mathrm{i}t_N}))=\frac{\mathrm{Tr}_{V_\lambda}(T_\lambda(K_{2\rho})L_{\lambda,1}^{\mathrm{i}t_1}\cdots L_{\lambda,N}^{\mathrm{i}t_N})}{\mathrm{Tr}_{V_\lambda}(T_\lambda(K_{2\rho}))}.\]
Thus, as the same proof of Lemma \ref{lem:miya}, we obtain the statement.
\end{proof}

Since the dual pairing of $\mathcal{O}(G_q)$ and $\mathcal{U}_q$ is nondegenerate (see \cite[Corollary 11.23]{KliSch}), we can identify $\mathcal{O}(G_q)$ with the linear subspace of $\mathcal{U}_q^*$. By \cite[Proposition 7.20]{KliSch}, we have the following:

\begin{lemma}\label{lem:jack}
$\mathcal{O}(G_q)$ is the linear subspace of $\mathcal{U}_q^*$ consisting of all matrix coefficients of subrepresentations in $(T_1^{\otimes n}, V_1^{\otimes n})$ for some $n\in\mathbb{N}$.
\end{lemma}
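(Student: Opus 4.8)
The plan is to transport the statement, via the nondegenerate Hopf $*$-pairing between $\mathcal{U}_q$ and $\mathcal{O}(G_q)$, to the standard description of a compact quantum group $*$-algebra as the span of the matrix coefficients of the subrepresentations of tensor powers of a generating corepresentation; this is essentially \cite[Proposition 7.20]{KliSch}, and the only real task is to check that the FRT construction transports everything faithfully.

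First I would pin down the embedding $\mathcal{O}(G_q)\hookrightarrow\mathcal{U}_q^*$ already used above: by the nondegeneracy of the pairing $(X,u_{ij})=t_{ij}(X)$ (\cite[Corollary 11.23]{KliSch}), the generator $u_{ij}$ becomes the linear functional $X\mapsto t_{ij}(X)$, i.e. the $(i,j)$-matrix entry of $(T_1,V_1)$; equivalently the fundamental corepresentation $U_1=\sum_{i,j}e_{ij}\otimes u_{ij}$ satisfies $T_{U_1}=T_1$. Since $\mathcal{O}(G_q)$ is generated as an algebra by the $u_{ij}$ (it is a quotient of $\mathcal{A}(R)$), its image in $\mathcal{U}_q^*$ is precisely the unital subalgebra generated by the matrix entries of $(T_1,V_1)$.

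The one computation is that products of matrix coefficients are matrix coefficients of tensor products. The embedding is an algebra morphism and the product on $\mathcal{U}_q^*$ is the convolution dual to the coproduct of $\mathcal{U}_q$, so for matrix coefficients $f_k(X)=\langle\xi_k,\pi_k(X)\eta_k\rangle$ of representations $(\pi_k,W_k)$, $k=1,\dots,n$,
\[(f_1\cdots f_n)(X)=\bigl\langle\xi_1\otimes\cdots\otimes\xi_n,\ (\pi_1\otimes\cdots\otimes\pi_n)(X)\,(\eta_1\otimes\cdots\otimes\eta_n)\bigr\rangle\]
is a matrix coefficient of $(\pi_1\otimes\cdots\otimes\pi_n,W_1\otimes\cdots\otimes W_n)$, and conversely every matrix coefficient of a tensor product is a finite sum of such products, since matrix coefficients against decomposable vectors already span the matrix-coefficient space. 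Taking all $(\pi_k,W_k)=(T_1,V_1)$, and noting that the defining relations of $\mathcal{O}(G_q)$ exhibit $1$ as a linear combination of products $u_{nj}u_{mk}$, hence as a matrix coefficient of $(T_1^{\otimes 2},V_1^{\otimes 2})$, we get that $\mathcal{O}(G_q)$ equals the span of the matrix coefficients of the representations $(T_1^{\otimes n},V_1^{\otimes n})$, $n\geq1$.

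Finally I would pass to subrepresentations. If $W\subseteq V_1^{\otimes n}$ is a subrepresentation, any matrix coefficient of the induced representation on $W$ becomes, after extending the functional arbitrarily from $W$ to $V_1^{\otimes n}$ (harmless because $W$ is $\mathcal{U}_q$-invariant), a matrix coefficient of $(T_1^{\otimes n},V_1^{\otimes n})$; so the span over all subrepresentations of all $V_1^{\otimes n}$ is contained in $\mathcal{O}(G_q)$. Conversely, finite-dimensional type $1$ representations of $\mathcal{U}_q$ are completely reducible (\cite[Chapter 7]{KliSch}), so $(T_1^{\otimes n},V_1^{\otimes n})$ decomposes into irreducible subrepresentations whose matrix coefficients span those of $V_1^{\otimes n}$, giving the reverse inclusion. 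Combining the two yields the lemma. I expect the first step to be where the care is needed — that the FRT quotient is realized faithfully inside $\mathcal{U}_q^*$, so that corepresentations, tensor products, and subobjects all correspond — but this is exactly what the cited nondegeneracy of the Hopf pairing provides; the rest is routine manipulation of matrix coefficients together with semisimplicity of type $1$ representations.
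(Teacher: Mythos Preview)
Your proposal is correct and follows the same approach as the paper: both use the nondegenerate pairing (\cite[Corollary 11.23]{KliSch}) to identify $\mathcal{O}(G_q)$ with a subspace of $\mathcal{U}_q^*$ and then appeal to \cite[Proposition 7.20]{KliSch}. The paper gives no proof beyond the citation, whereas you have unpacked the content of that proposition in detail; your expansion is accurate and the only care point you flag (faithfulness of the FRT quotient inside $\mathcal{U}_q^*$) is indeed handled by the cited nondegeneracy.
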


\section{The infinite-dimensional quantum groups of type $B, C, D$}
In this section, we study the quantized characters of inductive limits of $q$-deformed compact quantum groups of type $B, C, D$. For the case of type $A$ we already have studied in \cite{Sato1}. Let $(\mathcal{U}_{q, N}, G_q(N))$ be one of the 
\[\quad (U_{q^{1/2}}\mathfrak{so}_{2N+1}, SO_q(2N+1)),\quad (U_q\mathfrak{sp}_{2N}, Sp_q(N)),\quad (U_q\mathfrak{so}_{2N}, SO_q(2N)).\] 
We denote by $E_{N,i}, F_{N,i}, K_{N,i}^{\pm1}$ ($i=1,\dots, N$) the generators of $\mathcal{U}_{q, N}$. To describe a Hopf $*$-algebra homomorphism from $\mathcal{U}_{q, N}$ to $\mathcal{U}_{q, N+1}$ clearly, we fix a matrix representation of the Cartan matrix $A=(a_{ij})_{i,j=1}^N$ as follows:
\begin{itemize}

\item[($B_N$)] When $\mathfrak{g}_N=\mathfrak{so}_{2N+1}$, we set $a_{21}=-2$ and otherwise $a_{ij}=2\delta_{ij}-\delta_{i,j+1}-\delta_{i,j-1}$.

\item[($C_N$)] When $\mathfrak{g}_N=\mathfrak{sp}_{2N}$, we set $a_{12}=-2$ and otherwise $a_{ij}=2\delta_{ij}-\delta_{i,j+1}-\delta_{i,j-1}$.

\item[($D_N$)] When $\mathfrak{g}_N=\mathfrak{so}_{2N}$, we set $a_{12}=a_{21}=0$, $a_{31}=a_{13}=-1$, and otherwise $a_{ij}=2\delta_{ij}-\delta_{i,j+1}-\delta_{i,j-1}$.
\end{itemize}

Then we obtain the Hopf $*$-algebra homomorphism from $\Theta_{N}\colon \mathcal{U}_{q,N}\to \mathcal{U}_{q, N+1}$ such that
\[\Theta_{N}(E_{N,i})=E_{N+1,i},\quad \Theta_{N}(F_{N,i})=F_{N+1,i},\quad \Theta_{N}(K_{N,i}^{\pm1})=K_{N+1,i}^{\pm1}\]
for any $i=1,\dots, N$. We denote by the dual map $\theta_N\colon \mathcal{U}_{q, N+1}^*\to \mathcal{U}_{q, N}^*$. 

\begin{lemma}
$G_q(N)$ is a quantum subgroup of $G_q(N+1)$.
\end{lemma}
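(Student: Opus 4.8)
The plan is to show that the dual map $\theta_N\colon \mathcal{U}_{q,N+1}^*\to\mathcal{U}_{q,N}^*$ restricts to a surjective Hopf $*$-algebra homomorphism $\mathcal{O}(G_q(N+1))\to\mathcal{O}(G_q(N))$ which is compatible with all the structure maps, since this is exactly what is required for $G_q(N)$ to be a quantum subgroup of $G_q(N+1)$ in the sense of compact quantum groups. Since $\Theta_N$ is already a Hopf $*$-algebra homomorphism by construction, its dual $\theta_N$ is automatically a unital $*$-algebra homomorphism which is comultiplicative; the only genuine content is (i) that $\theta_N$ maps $\mathcal{O}(G_q(N+1))$ into $\mathcal{O}(G_q(N))$, and (ii) that this restriction is surjective.

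For step (i) I would use the characterization of $\mathcal{O}(G_q)$ from Lemma \ref{lem:jack}: $\mathcal{O}(G_q(N+1))$ consists of all matrix coefficients of subrepresentations of $(T_1^{\otimes n}, V_1^{\otimes n})$ of $\mathcal{U}_{q,N+1}$ for some $n$. Pulling back along $\Theta_N$, the $\mathcal{U}_{q,N+1}$-representation $(T_1, V_1)$ of dimension $d_{N+1}$ becomes a $\mathcal{U}_{q,N}$-representation; I must check that this restriction decomposes as $(T_1, V_1)$ of $\mathcal{U}_{q,N}$ (the fundamental/vector representation of dimension $d_N$) plus lower pieces — concretely, for $SO_q$ the vector representation of dimension $2N+3$ restricts to the vector representation of dimension $2N+1$ plus a two-dimensional trivial-type part, and for $Sp_q$ the $2N+2$-dimensional one restricts to the $2N$-dimensional one plus trivial pieces. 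This is a purely $\mathcal{U}_q$-level computation using the explicit embedding $\Theta_N$ on the $E_i, F_i, K_i$ and the branching rules for the vector representation; it follows because the branching is the same as in the classical (non-quantum) case since weight multiplicities agree (as already invoked in the proof of Lemma \ref{lem:miya}). Consequently matrix coefficients of subrepresentations of $V_1^{\otimes n}$ for $\mathcal{U}_{q,N+1}$ pull back to matrix coefficients of subrepresentations of (the $\mathcal{U}_{q,N}$-restriction of $V_1^{\otimes n}$), which is a subrepresentation of $(V_1^{\otimes n'})$ for $\mathcal{U}_{q,N}$ with $n'=n$; applying Lemma \ref{lem:jack} again in the reverse direction gives $\theta_N(\mathcal{O}(G_q(N+1)))\subseteq \mathcal{O}(G_q(N))$.

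For step (ii), surjectivity, it suffices to produce, for each of the generators $u_{ij}$ of $\mathcal{O}(G_q(N))$, a preimage in $\mathcal{O}(G_q(N+1))$. Since the $u_{ij}$ of $\mathcal{O}(G_q(N))$ are the matrix coefficients $t_{ij}$ of the $\mathcal{U}_{q,N}$-fundamental representation $(T_1,V_1)$, and since by the branching above $(T_1, V_1)$ of $\mathcal{U}_{q,N}$ sits inside the restriction of the $\mathcal{U}_{q,N+1}$-fundamental representation, each $t_{ij}^{(N)}$ is $\theta_N$ of a matrix coefficient of the $\mathcal{U}_{q,N+1}$-fundamental representation, hence $\theta_N$ of an element of $\mathcal{O}(G_q(N+1))$; since the $u_{ij}$ generate $\mathcal{O}(G_q(N))$ as an algebra and $\theta_N$ is an algebra homomorphism, surjectivity follows. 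Finally $\theta_N$ intertwines the counits and antipodes automatically (duality of $\Theta_N$), and it is $*$-preserving because $\Theta_N$ is a $*$-homomorphism and the $*$-structures on $\mathcal{O}(G_q(N))$, $\mathcal{O}(G_q(N+1))$ are the ones induced from the dual pairings with $\mathcal{U}_{q,N}$, $\mathcal{U}_{q,N+1}$; at the $C^*$-level, since $C(G_q(N))$ is by convention the universal $C^*$-algebra on $\mathbb{C}[G_q(N)]=\mathcal{O}(G_q(N))$, the surjective Hopf $*$-homomorphism extends to a surjective $*$-homomorphism $C(G_q(N+1))\to C(G_q(N))$ intertwining the comultiplications, which is the definition of a quantum subgroup.

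I expect the main obstacle to be step (i): verifying carefully that the explicitly chosen embedding $\Theta_N$ — with the slightly nonstandard Cartan-matrix normalizations in cases $(B_N), (C_N), (D_N)$ — really does send the vector corepresentation of $G_q(N+1)$ to something whose restriction contains the vector corepresentation of $G_q(N)$, and tracking how the FRT relations (the metric-type relations with the constants $c_i$, and the quantum-determinant relation) behave under $\theta_N$. One must make sure the constants $c_i$ for level $N$ and level $N+1$ are compatible — i.e. that $c^{(N+1)}_i$ and $c^{(N)}_i$ are related by the correct index shift coming from $2\rho_{N+1}=2\rho_N+c(X)_{N+1}\epsilon_{N+1}$ and the embedding of root systems — so that the defining relations of $\mathcal{O}(G_q(N))$ are indeed satisfied by the images of the $u_{ij}$; this is where the precise choices in \eqref{eq:daniels} and the definitions of $c_i$ pay off, but it requires care.
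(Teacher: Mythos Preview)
Your approach is essentially the paper's: reduce to $\theta_N(\mathcal{O}(G_q(N+1)))=\mathcal{O}(G_q(N))$ and establish this via branching plus Lemma~\ref{lem:jack}. The paper shortens the reduction by citing \cite[Theorem~2.7.10]{NeshveyevTuset} and \cite[Lemma~2.8]{Tomatsu}, and phrases the branching step directly for an arbitrary $T_\lambda$ (with $\lambda\in P(G_q(N+1))$) rather than going through the vector representation and its tensor powers.

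Two small remarks. In your step~(i) there is a slip: the $\mathcal{U}_{q,N}$-restriction of $(V_1)_{N+1}^{\otimes n}$ is \emph{not} a subrepresentation of $(V_1)_N^{\otimes n}$ (it has larger dimension, since $d_{N+1}>d_N$); the correct statement, and the one the paper uses, is that all irreducible constituents of this restriction have highest weights in $P(G_q(N))$, which by Lemma~\ref{lem:jack} suffices. And your final-paragraph worry about tracking the constants $c_i$ and the FRT/metric relations is unnecessary: $\theta_N$ is by construction the dual of a Hopf $*$-algebra homomorphism $\Theta_N$, hence automatically a well-defined unital $*$-algebra map on all of $\mathcal{U}_{q,N+1}^*$; no relations need to be verified, only that the image equals $\mathcal{O}(G_q(N))$.
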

\begin{proof}
We remark that $G_q(N)$ has the same representation theory of $U(N), SO(2N+1), Sp(N)$ or $SO(2N)$. Thus, it suffices to show that $\theta_N(\mathcal{O}(G_q(N+1)))=\mathcal{O}(G_q(N))$ by \cite[Theorem 2.7.10]{NeshveyevTuset}, \cite[Lemma 2.8]{Tomatsu}. However, for every $\lambda\in P(G_q(N+1))$ the representation $(T_\lambda\Theta_N, V_\lambda)$ of $\mathcal{U}_{q, N}$ is decomposed into irreducible representations with highest weights in $P(G_q(N))$ (see \cite[Section 25.3]{FH:book}). Therefore, by Lemma \ref{lem:jack}, we have $\theta_N(\mathcal{O}(G_q(N+1)))=\mathcal{O}(G_q(N))$.
\end{proof}

By this lemma, the Hopf $*$-algebra homomorphism $\Theta_N\colon\mathcal{U}_{q, N}\to\mathcal{U}_{q, N+1}$ can be extended to the $*$-algebra homomorphism from $\mathcal{O}(G_q(N))^*$ to $\mathcal{O}(G_q(N+1))^*$, and it maps $W^*(G_q(N))$ to $W^*(G_q(N+1))$. In what follows, we use the same symbol $\Theta_N$ to denote this $*$-homomorphism from $W^*(G_q(N))$ to $W^*(G_q(N+1))$. Then $\Theta_N$ is a normal unital $*$-homomorphism.

Let $\kappa_N\colon\mathbb{T}^N\to W^*(G_q(N))$ be as is in Section 4. We remark that $\chi\Theta_N\in\mathrm{Ch}(G_q(N))$ for any $\chi\in\mathrm{Ch}(G_q(N+1))$.

\begin{lemma}\label{lem:yuki}
For any $\chi\in\mathrm{Ch}(G_q(N+1))$ and any $(z_1,\dots,z_N)\in\mathbb{T}^N$
\[\chi(\Theta_{N}(\kappa_N(z_1,\dots,z_N)))=\chi(\kappa_{N+1}(z_1,\dots,z_N,1)).\] 
\end{lemma}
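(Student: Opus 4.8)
The plan is to compute both sides by reducing to the explicit character formula of Proposition \ref{prop:contact}, using the fact (Remark after the definition of quantized characters) that every quantized character of $G_q(N+1)$ is a convex combination of irreducible ones $\chi_\mu$ with $\mu\in P(G_q(N+1))$. Since both sides of the claimed identity are affine in $\chi$, it suffices to prove the identity when $\chi=\chi_\mu$ is an irreducible quantized character of $G_q(N+1)$. So the goal reduces to showing, for each $\mu\in P(G_q(N+1))$,
\[
\chi_\mu\bigl(\Theta_N(\kappa_N(z_1,\dots,z_N))\bigr)=\chi_\mu\bigl(\kappa_{N+1}(z_1,\dots,z_N,1)\bigr).
\]

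First I would handle the right-hand side: by Proposition \ref{prop:contact} applied at rank $N+1$, it equals $f^{X_{N+1}}_\mu(q^{c(X)_1}z_1,\dots,q^{c(X)_N}z_N,q^{c(X)_{N+1}})$ divided by $f^{X_{N+1}}_\mu(q^{c(X)_1},\dots,q^{c(X)_{N+1}})$; that is, it is the normalized value of the rank-$(N+1)$ irreducible character with the last variable specialized to $1$. For the left-hand side, the key point is to identify the operator $\Theta_N(\kappa_N(z_1,\dots,z_N))$ inside $W^*(G_q(N+1))$. Recall $\kappa_N(q^{\mathrm{i}t_1},\dots,q^{\mathrm{i}t_N})=\Phi^{-1}\bigl((L_{\lambda,1}^{\mathrm{i}t_1}\cdots L_{\lambda,N}^{\mathrm{i}t_N})_{\lambda}\bigr)$, built from the elements $L_{\lambda,i}$ which act on a weight vector $v_\nu\in V_\lambda$ by the scalar $q^{\mathrm{i}(t_1\epsilon_1+\cdots+t_N\epsilon_N,\nu)}$. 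Since $\Theta_N$ is the $*$-homomorphism dual to the Hopf algebra inclusion $\mathcal{U}_{q,N}\hookrightarrow\mathcal{U}_{q,N+1}$ that sends generators to generators with the same index, $\Theta_N(\kappa_N(z_1,\dots,z_N))$ is precisely the element of $W^*(G_q(N+1))$ that acts on a weight vector $v_\nu$ of $V_\mu$ (for $\mu\in P(G_q(N+1))$) by $q^{\mathrm{i}(t_1\epsilon_1+\cdots+t_N\epsilon_N,\nu)}$ — i.e., by $z_1^{\nu_1}\cdots z_N^{\nu_N}$ where $\nu=\sum\nu_i\epsilon_i$, with no dependence on the coordinate $\nu_{N+1}$. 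Equivalently, it behaves exactly like $\kappa_{N+1}$ evaluated with the $(N+1)$-st torus variable set to $1$.

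Given that identification, the left-hand side becomes $\mathrm{Tr}_{V_\mu}(T_\mu(K_{2\rho})\,z_1^{\nu_1}\cdots z_N^{\nu_N})/\mathrm{Tr}_{V_\mu}(T_\mu(K_{2\rho}))$ read off the weight decomposition, which is the same trace computation as in the proof of Lemma \ref{lem:miya} but with $z_{N+1}=1$; this is exactly $f^{X_{N+1}}_\mu(q^{c(X)_1}z_1,\dots,q^{c(X)_N}z_N,q^{c(X)_{N+1}})/f^{X_{N+1}}_\mu(q^{c(X)_1},\dots,q^{c(X)_{N+1}})$ — matching the right-hand side. I expect the main obstacle to be the bookkeeping in the second step: one must check carefully that $\Theta_N$ really does send $L_{\lambda,i}$ (at rank $N$) to the operator acting by the $\epsilon_i$-weight at rank $N+1$, which requires tracking how the coefficients $c_{ij}$ expressing $\epsilon_i$ in the $\alpha_j$-basis change between ranks $N$ and $N+1$ under the chosen normalization of the Cartan matrices $(B_N)$, $(C_N)$, $(D_N)$, and confirming that the resulting scalar on each weight space genuinely ignores the last coordinate. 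Once that compatibility of $\Theta_N$ with the $\kappa$'s is nailed down, the rest is the trace identity already established in Lemma \ref{lem:miya} and Proposition \ref{prop:contact}.
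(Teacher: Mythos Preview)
Your proposal is correct and the core of the argument is the same as the paper's: both hinge on checking that, under the embedding $\Theta_N$, the torus element $\kappa_N(z_1,\dots,z_N)$ acts on each rank-$(N{+}1)$ weight space exactly as $\kappa_{N+1}(z_1,\dots,z_N,1)$ does. The paper carries this out by writing $L_{\lambda,i}^{\mathrm{i}t}=\sum_{\alpha}q^{\mathrm{i}t(\epsilon_i,\alpha)}p_{\lambda;\alpha}$ and tracking these projections through the block decomposition of $\Theta_N$, arriving at the \emph{operator identity} $\Theta_N(\kappa_N(z_1,\dots,z_N))=\kappa_{N+1}(z_1,\dots,z_N,1)$ in $W^*(G_q(N+1))$; the lemma then follows for every $\chi$ at once, with no need to reduce to irreducible $\chi_\mu$ or to invoke Proposition~\ref{prop:contact}. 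Your route through the decomposition $\chi=\sum m(\mu)\chi_\mu$ and the explicit character formula is a harmless detour around this, and your identification of the one genuine verification needed---that $\Theta_N$ carries $L_{\lambda,i}$ to the operator recording the $\epsilon_i$-component of the rank-$(N{+}1)$ weight---is exactly the point the paper's weight-space computation settles.
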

\begin{proof}
For each $N$ there is a $*$-isomorphism $\Phi_{N}\colon\mathcal{O}(G_q(N))^*\to\prod_{\lambda\in P(G_q(N))}B(V_\lambda)$ such that $\Phi_{N}(x):=((\mathrm{id}\otimes x)(U_\lambda))_{\lambda\in P(G_q(N))}$ for any $x\in\mathcal{O}(G_q(N))^*$. For any $\nu\in P(G_q(N+1))$ we let $\bigoplus_{\lambda\in P(\nu)}(T_\lambda, V_\lambda)^{\oplus m^\nu_\lambda}$ be the irreducible decomposition of $(T_\nu\Theta_N, V_\nu)$. Then, by identifying $\bigoplus_{\lambda\in P(\nu)}B(V_\lambda)\otimes M_{m^\nu_\lambda}(\mathbb{C})$ with a subalgebra of $B(V_\nu)$, we have 
\[\Theta_N(\Phi_N^{-1}((A_\lambda)_{\lambda\in P(G_q(N))}))=\Phi_{N+1}^{-1}\left(\left(\sum_{\lambda\in P(\nu)}A_\lambda\otimes I_{m^\nu_\lambda}\right)_{\nu\in P(G_q(N+1))}\right)\]
for any $\Phi_N^{-1}((A_\lambda)_{\lambda\in P(G_q(N))})\in W^*(G_q(N))$. See \cite[Section 2.3]{Sato1}. For $\lambda\in P(G_q(N))$ let $\bigoplus_{\alpha\in Q(\lambda)}V_{\lambda;\alpha}$ be the weight decomposition of $V_\lambda$, where $V_{\lambda; \alpha}$ is the weight subspace with the weight $\alpha$. Then we have $L_{\lambda, i}^{\mathrm{i}t}=\sum_{\alpha\in Q(\lambda)}q^{\mathrm{i}t(\epsilon_i,\alpha)}p_{\lambda;\alpha}$, where $p_{\lambda;\alpha}$ is the projection onto $V_{\lambda;\alpha}$. Moreover, we have $V_\nu\cong\bigoplus_{\lambda\in P(\nu)}\bigoplus_{\alpha\in Q(\lambda)}\bigoplus_{\beta\in Q(\nu)}(V_{\lambda;\alpha}\cap V_{\nu;\beta})^{\oplus m^\nu_\lambda}$, and for any $i=1,\dots, N$ we have $L_{\nu,i}^{\mathrm{i}t}=\sum_{\lambda\in P(\nu)}\sum_{\alpha\in Q(\lambda)\sum_{\beta\in Q(\nu)}}q^{\mathrm{i}t(\epsilon_i, \alpha)}p_{\lambda;\alpha}p_{\nu;\beta}\otimes I_{m^\nu_\lambda}$. Therefore, for $i=1,\dots, N$ we have 
$\Theta_N(\Phi_N^{-1}((L_{\lambda,i}^{\mathrm{i}t})_{\lambda\in P(G_q(N))}))=\Phi_{N+1}^{-1}((L_{\nu,i}^{\mathrm{i}t})_{\nu\in P(G_q(N+1))}))$, and hence 
\begin{align*}
\Theta_N(\kappa_N(q^{\mathrm{i}t_1},\dots, q^{\mathrm{i}t_N}))
&=\kappa_{N+1}(q^{\mathrm{i}t_1},\dots, q^{\mathrm{i}t_N}, 1)
\end{align*}
for any $t_1,\dots, t_N\in [0, -2\pi/\log q)$.
\end{proof}

Now we define $G_q(\infty)$ as the inductive limit quantum group of the $G_q(N)$, that is, $G_q(\infty)$ is the tuple $(W^*(G_q(\infty)),\mathfrak{A}, \delta, R, \tau)$ of
\begin{itemize}
\item the $W^*$-algebra $W^*(G_q(\infty))$ given as the inductive limit of the $W^*(G_q(N))$,
\item a $\sigma$-weakly dense $C^*$-subalgebra $\mathfrak{A}\subset W^*(G_q(\infty))$, called \emph{Stratila--Voiculescu AF-algebra},
\item a comultiplication $\delta\colon W^*(G_q(\infty))\to W^*(G_q(\infty))\bar\otimes W^*(G_q(\infty))$, that is, $\delta$ is a unital normal $*$-homomorphism satisfying $(\delta\otimes\mathrm{id})\delta=(\mathrm{id}\otimes\delta)\delta$,
\item a unitary antipode $R\colon W^*(G_q(\infty))\to W^*(G_q(\infty))$, that is, $R$ is an involutive normal anti-$*$-automorphism satisfying $\delta R=\sigma(R\otimes R)\delta^{op}$,
\item a deformation automorphism group $\tau\colon \mathbb{R}\curvearrowright W^*(G_q(\infty))$, that is, $\tau$ is a $\mathbb{R}$-action on $W^*(G_q(\infty))$ satisfying $\delta\tau_t=(\tau_t\otimes \tau_t)\delta$ and $R\tau_t=\tau_t R$ for any $t\in\mathbb{R}$. 
\end{itemize}

We remark that $\delta^{op}:=\sigma\delta$, where $\sigma$ is the flip map on $W^*(G_q(\infty)\otimes W^*(G_q(\infty))$. Since $W^*(G_q(\infty))$ is the inductive limit of the $W^*(G_q(N))$, there exist $\Theta^\infty_N\colon W^*(G_q(N))\to W^*(G_q(\infty))$ for any $N$ such that $\Theta^\infty_{N+1}\Theta_N=\Theta^\infty_N$, and they satisfy the \emph{universality of inductive limits}. See \cite{Sato3} for more details.

\begin{definition}
We define $G_q(\infty)=SO_q(2\infty+1)$, $Sp_q(\infty)$, $SO_q(2\infty)$ as their inductive limits of $G_q(N)=SO_q(2N+1), Sp_q(N), SO_q(2N)$, respectively. 
\end{definition}

\begin{remark}
In the classical case, there is no difference between $SO(2\infty+1)$ and $SO(2\infty)$ because $SO(2N-1)$, $SO(2N)$ are naturally subgroups in $SO(2N)$, $SO(2N+1)$, respectively. However, in the quantum case, there are no natural homomorphisms from $U_q\mathfrak{so}_{2N-1}$ to $U_q\mathfrak{so}_{2N}$ or from $U_q\mathfrak{so}_{2N}$ to $\mathfrak{so}_{2N+1}$ since Cartan matrices of type $B, D$ are quite different. Thus, we have two inductive limits $SO_q(2\infty+1)$ and $SOq(2\infty)$ in the quantum case.
\end{remark}

\begin{definition}
A normal $\tau$-KMS state on $W^*(G_q(\infty))$ with the inverse temperature -1 is called a \emph{quantized character} of $G_q(\infty)$.
\end{definition}

Let $\mathrm{Ch}(G_q(\infty))$ be the set of all quantized characters of $G_q(\infty)$. To analyze $\mathrm{Ch}(G_q(\infty))$, we introduce coherent systems of the BC type $q$-Gelfand-Tsetlin graph. Let $\mathbb{S}_N^+$ be the set of nonnegative signatures, that is,
$\mathbb{S}_N^+=\{\lambda=(\lambda_1,\dots, \lambda_N)\in\mathbb{Z}^N\mid \lambda_1\geq\cdots\geq\lambda_N\geq0\}$. Then we can identify $\mathbb{S}_N^+$ with $P(G_q(N))$ when $G_q(N)=SO_q(2N+1)$ or $Sp_q(N)$. For any $\lambda\in\mathbb{S}_N^+$ we define the function $g^{X_N}_\lambda$ on $\mathbb{T}^N$ by
\[g^{X_N}_\lambda(z_1,\dots, z_N):=\begin{cases}
\displaystyle \frac{f^{X_N}_\lambda(q^{c(X)_1}z_1,\dots, q^{c(X)_N}z_N)}{f^{X_N}_\lambda(q^{c(X)_1},\dots, q^{c(X)_N})} & (X=B, C),\\
\displaystyle \frac{f^{X_N}_\lambda(q^{c(D)_1}z_1,\dots, q^{c(D)_N}z_N)+f^{X_N}_{\tilde{\lambda}}(q^{c(D)_1}z_1,\dots, q^{c(D)_N}z_N)}{f^{X_N}_\lambda(q^{c(D)_1},\dots, q^{c(D)_N})+f^{X_N}_{\tilde{\lambda}}(q^{c(D)_1},\dots, q^{c(D)_N})} & (X=D),\\
\end{cases}\]
where $\tilde{\lambda}:=\sum_{i=1}^{N-1}\lambda_i\epsilon_i-\lambda_N\epsilon_N\in P(SO_q(2N))$ for $\lambda\in\mathbb{S}_N^+$. Let $X$ be one of the $B,C,D$. Then we define a function ${}_q\Lambda^N_{N-1}$ on $\mathbb{S}^+_N\times \mathbb{S}^+_{N-1}$ by
\[g^{X_{N}}_\lambda(z_1,\dots,z_{N-1},1)=\sum_{\mu\in\mathbb{S}_N^+}{}_q\Lambda^{N+1}_N(\lambda, \mu)g^{X_{N-1}}_\mu(z_1,\dots,z_{N-1}).\]
We remark that it is non-trivial that the left-hand side decomposes into the right-hand side in the case of $X=D$ since $g^{D_{N-1}}_\lambda$ is not irreducible. However, we have the above decomposition of $g^{D_N}_\lambda(z_1,\dots, z_{N-1},1)$ by \cite[Proposition 2.5]{CG20}. By Proposition \ref{prop:contact} and Lemma \ref{lem:yuki}, we have ${}_q\Lambda^{N}_{N-1}(\lambda, \,\cdot\,)\geq0$ and 
$\sum_{\mu\in\mathbb{S}_{N-1}^+}{}_q\Lambda^{N}_{N-1}(\lambda,\mu)=1$ for any $\lambda\in\mathbb{S}^+_N$.
Let $\mathcal{M}_p(\mathbb{S}_N^+)$ be the set of all probability measures on $\mathbb{S}_N^+$. Then we have $m\in\mathcal{M}_p(\mathbb{S}_{N}^+)\mapsto m{}_q\Lambda^{N}_{N-1}\in\mathcal{M}_p(\mathbb{S}_{N-1}^+)$ by
\[m{}_q\Lambda^{N}_{N-1}(\mu)=\sum_{\lambda\in\mathbb{S}_{N}^+}m(\lambda){}_q\Lambda^{N}_{N-1}(\lambda, \mu)\]
for any $\mu\in\mathbb{S}_{N-1}^+$. Thus, the $\mathcal{M}_p(\mathbb{S}^+_N)$ form a projective system, and an element in the projective limit $\varprojlim_N(\mathcal{M}_p(\mathbb{S}_N^+), {}_q\Lambda^N_{N-1})$ is called a \emph{coherent system} of the $BC$ type $q$-Gelfand-Tsetlin graph (see \cite{CG20} for the definition of the $BC$ type $q$-Gelfand--Tsetlin graph itself). It is clear that $\varprojlim_N(\mathcal{M}_p(\mathbb{S}_N^+), {}_q\Lambda^N_{N-1})$ is a convex set. Moreover, we equip it with the topology of component-wise weakly convergence. We also equip $\mathrm{Ch}(G_q(\infty))$ with the topology of point-wise convergence on $\mathfrak{A}$ (see \cite[Remark 3.2]{Sato3}). Then we obtain the main theorem in the paper.

\begin{theorem}\label{thm:main}
For $G_q(\infty)=SO_q(2\infty+1), Sp_q(\infty)$, there exists an affine homeomorphism from $\mathrm{Ch}(G_q(\infty))$ to $\varprojlim_N(\mathcal{M}_p(\mathbb{S}_N^+), {}_q\Lambda^N_{N-1})$ such that the coherent system $(m_N)_N$ corresponding to $\chi\in\mathrm{Ch}(G_q(\infty))$ is given as $\chi\Theta^N_\infty=\sum_{\lambda\in P(G_q(N))}m_N(\lambda)\chi_\lambda$ for any $N$.
\end{theorem}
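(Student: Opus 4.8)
The plan is to reduce the statement to the established theory of quantized characters via the standard passage to the inductive limit, following the template of \cite[Section 3]{Sato3}. First I would recall from that reference that a normal $\tau$-KMS state $\chi$ on $W^*(G_q(\infty))$ with inverse temperature $-1$ restricts, via the connecting maps $\Theta^\infty_N$, to a compatible family $\chi\Theta^\infty_N\in\mathrm{Ch}(G_q(N))$, and conversely any such compatible family assembles into a unique element of $\mathrm{Ch}(G_q(\infty))$ by the universality of the inductive limit; this gives an affine homeomorphism $\mathrm{Ch}(G_q(\infty))\cong\varprojlim_N(\mathrm{Ch}(G_q(N)),\Theta_N^*)$ once both sides carry the right topologies (point-wise convergence on $\mathfrak{A}$ on the left, component-wise weak convergence on the right). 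So the task becomes identifying each $\mathrm{Ch}(G_q(N))$ with $\mathcal{M}_p(\mathbb{S}_N^+)$ in a way compatible with the bonding maps, i.e.\ intertwining $\Theta_N^*$ with ${}_q\Lambda^N_{N-1}$.

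Second, for fixed $N$ I would invoke the last remark of Section 2: every quantized character of the compact quantum group $G_q(N)$ is a convex combination of the irreducible ones $\chi_\lambda$, $\lambda\in P(G_q(N))=\mathbb{S}_N^+$ (using the identification valid for types $B$ and $C$), and the decomposition is unique since the $\chi_\lambda$ are the extreme points. This yields the affine homeomorphism $\mathrm{Ch}(G_q(N))\cong\mathcal{M}_p(\mathbb{S}_N^+)$, $\chi\mapsto m_N$ with $\chi=\sum_\lambda m_N(\lambda)\chi_\lambda$; the topology on $\mathrm{Ch}(G_q(N))$ (point-wise on the AF-algebra, equivalently on $\mathbb{C}[G_q(N)]$) matches the weak topology on $\mathcal{M}_p(\mathbb{S}_N^+)$ because $\mathbb{S}_N^+$ is discrete and each $\chi_\lambda$ is, evaluated against a fixed matrix coefficient, a polynomial data point.

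Third — and this is the computational heart — I would show the bonding maps correspond. Take $\chi\in\mathrm{Ch}(G_q(N+1))$ with data $m_{N+1}$, so $\chi=\sum_{\nu\in\mathbb{S}_{N+1}^+}m_{N+1}(\nu)\chi_\nu$; then $\chi\Theta_N=\sum_\nu m_{N+1}(\nu)\,\chi_\nu\Theta_N$, and it suffices to show $\chi_\nu\Theta_N=\sum_{\mu\in\mathbb{S}_N^+}{}_q\Lambda^{N+1}_N(\nu,\mu)\chi_\mu$ for each $\nu$. To see this I would evaluate both sides on the elements $\kappa_N(z_1,\dots,z_N)$: by Proposition \ref{prop:contact} the right side is $\sum_\mu{}_q\Lambda^{N+1}_N(\nu,\mu)\,g^{X_N}_\mu(z_1,\dots,z_N)$, while by Lemma \ref{lem:yuki} the left side is $\chi_\nu(\kappa_{N+1}(z_1,\dots,z_N,1))=g^{X_{N+1}}_\nu(z_1,\dots,z_N,1)$, and these agree precisely by the defining branching relation for ${}_q\Lambda^{N+1}_N$ (which in turn rests on \cite[Proposition 2.5]{CG20} in the relevant form, though for types $B,C$ the functions $g$ are irreducible and the relation is the genuine branching rule $T_\nu\Theta_N=\bigoplus_\mu V_\mu^{\oplus m}$ combined with Lemma \ref{lem:miya}). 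Since the $\{\kappa_N(z):z\in\mathbb{T}^N\}$ generate a weakly dense subalgebra of $W^*(G_q(N))$ — they separate the central summands $B(V_\lambda)$ through their eigenvalue structure — equality of the quantized characters on these elements forces equality everywhere, giving $\Theta_N^* m_{N+1}=m_{N+1}\,{}_q\Lambda^{N+1}_N$ as desired. Assembling, the composite $\mathrm{Ch}(G_q(\infty))\cong\varprojlim_N\mathrm{Ch}(G_q(N))\cong\varprojlim_N\mathcal{M}_p(\mathbb{S}_N^+)$ is affine, bijective, and bicontinuous, and by construction sends $\chi$ to the system $(m_N)_N$ with $\chi\Theta^N_\infty=\sum_\lambda m_N(\lambda)\chi_\lambda$.

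I expect the main obstacle to be the density/separation argument that makes "agreement on all $\kappa_N(z_1,\dots,z_N)$" equivalent to "agreement as normal states on $W^*(G_q(N))$": one must check that the operators $\kappa_N(z)$, whose restriction to each block $B(V_\lambda)$ is the diagonal operator $\prod_i L_{\lambda,i}^{\mathrm{i}t_i}$, generate enough of $W^*(G_q(N))=\ell^\infty\text{-}\bigoplus_\lambda B(V_\lambda)$ so that a normal quantized character (already known to be a convex combination of the $\chi_\lambda$, hence determined by the numbers $m_N(\lambda)=\chi(z_\lambda)$ for the central projections $z_\lambda$) is pinned down — concretely, that averaging $\kappa_N(z)$ over $z\in\mathbb{T}^N$ recovers, in the limit, the central projections, which follows because the characters $\mu\mapsto q^{\mathrm{i}(t,\mu)}$ of the finitely many weights occurring in $V_\lambda$ are linearly independent functions of $t$. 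The remaining steps are bookkeeping about topologies and the standard KMS-state inductive-limit correspondence from \cite{Sato3}, which I would cite rather than reprove.
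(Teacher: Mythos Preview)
Your approach is essentially the paper's: the proof there is a one-line citation of \cite[Proposition~2.6, Proposition~2.1]{Sato1}, which package exactly the two ingredients you unpack (the inductive-limit correspondence $\mathrm{Ch}(G_q(\infty))\cong\varprojlim_N\mathrm{Ch}(G_q(N))$ and the simplex identification $\mathrm{Ch}(G_q(N))\cong\mathcal{M}_p(\mathbb{S}_N^+)$), and the paper has already matched the bonding maps to ${}_q\Lambda^N_{N-1}$ just before stating the theorem, via Proposition~\ref{prop:contact} and Lemma~\ref{lem:yuki}, in precisely the way you propose.

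One correction worth making: the assertion in your third paragraph that the $\kappa_N(z)$ generate a weakly dense subalgebra of $W^*(G_q(N))$ is false --- these elements are simultaneously diagonal in the weight bases of every $V_\lambda$, so they generate only a commutative (maximal torus) subalgebra. Fortunately you do not need density: since both sides of the desired identity $\chi_\nu\Theta_N=\sum_\mu{}_q\Lambda^{N+1}_N(\nu,\mu)\chi_\mu$ are already known to be convex combinations of the $\chi_\mu$, agreement on all $\kappa_N(z)$ forces equality of the probability weights by the linear independence (indeed $L^2$-orthogonality) of the Weyl characters $g^{X_N}_\mu$ on $\mathbb{T}^N$. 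Your final paragraph essentially arrives at this, so the argument goes through once you drop the density claim and invoke orthogonality of characters directly rather than the averaging-to-central-projections heuristic (which as stated would only recover the projection onto the zero weight space, not the central projections $z_\lambda$).
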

\begin{proof}
It follows from the general statements \cite[Proposition 2.6]{Sato1} and \cite[Proposition 2.1]{Sato1}.
\end{proof}

The following is a consequence of Theorem \ref{thm:main} and \cite[Theorem 6.6]{CG20}.
\begin{corollary}\label{cor:para}
The extreme quantized characters of $G_q(\infty)=SO_q(2\infty+1), Sp_q(\infty)$ are completely parametrized by the set
$\mathfrak{Y}=\{(y_1,y_2,\dots)\in\mathbb{Z}_{\geq0}^\infty\mid 0\leq y_1\leq y_2\leq\cdots\}$.
\end{corollary}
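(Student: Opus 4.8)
The plan is to read off Corollary \ref{cor:para} as a direct translation of the classification in \cite[Theorem 6.6]{CG20} through the affine homeomorphism established in Theorem \ref{thm:main}. First I would recall that Theorem \ref{thm:main} gives an affine homeomorphism $\mathrm{Ch}(G_q(\infty))\xrightarrow{\sim}\varprojlim_N(\mathcal{M}_p(\mathbb{S}_N^+),{}_q\Lambda^N_{N-1})$; since this is an affine homeomorphism between convex sets, it restricts to a homeomorphism between the respective sets of extreme points. Hence the extreme quantized characters of $G_q(\infty)$ are in bijection with the extreme points of the projective limit $\varprojlim_N(\mathcal{M}_p(\mathbb{S}_N^+),{}_q\Lambda^N_{N-1})$, i.e.\ with the \emph{minimal boundary} of the $BC$ type $q$-Gelfand--Tsetlin graph.

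The second step is simply to invoke the main result of Cuenca and Gorin: \cite[Theorem 6.6]{CG20} identifies this minimal boundary (equivalently, the set of extreme coherent systems) with the set $\mathfrak{Y}=\{(y_1,y_2,\dots)\in\mathbb{Z}_{\geq0}^\infty\mid 0\leq y_1\leq y_2\leq\cdots\}$. One should check the mild bookkeeping point that the kernels ${}_q\Lambda^N_{N-1}$ used here coincide with the links appearing in \cite{CG20}; this was already noted just before Theorem \ref{thm:main} using \cite[Proposition 2.5]{CG20}, so it is essentially built in. Composing the two bijections yields the claimed parametrization, and the topology matches because both identifications in Theorem \ref{thm:main} and in \cite[Theorem 6.6]{CG20} are homeomorphisms onto the stated spaces (with the component-wise weak topology on the projective limit).

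I do not anticipate a serious obstacle here: the corollary is stated as ``a consequence of Theorem \ref{thm:main} and \cite[Theorem 6.6]{CG20},'' and the only real content beyond citing those two results is the standard fact that an affine homeomorphism of convex sets preserves extreme points. The one place where care is warranted is ensuring that the combinatorial setup in \cite{CG20} (in particular the choice of normalization in the definition of the links, and the handling of the type $D$ functions $g^{D_N}_\lambda$ which are reducible) is the same as the one fixed in Section 5 above; but since the type $B$ and $C$ cases $SO_q(2\infty+1)$ and $Sp_q(\infty)$ are exactly the ones covered by Theorem \ref{thm:main}, and their links ${}_q\Lambda^N_{N-1}$ are literally the $BC$ type $q$-Gelfand--Tsetlin links of \cite{CG20}, no extra work is needed. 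Thus the proof is a one-line composition of the two cited theorems together with the extreme-point-preservation remark.

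\begin{proof}
By Theorem \ref{thm:main}, there is an affine homeomorphism between $\mathrm{Ch}(G_q(\infty))$ and $\varprojlim_N(\mathcal{M}_p(\mathbb{S}_N^+),{}_q\Lambda^N_{N-1})$. Since an affine homeomorphism between convex sets maps extreme points to extreme points, the extreme quantized characters of $G_q(\infty)$ are in bijection with the extreme points of $\varprojlim_N(\mathcal{M}_p(\mathbb{S}_N^+),{}_q\Lambda^N_{N-1})$, i.e.\ with the minimal boundary of the $BC$ type $q$-Gelfand--Tsetlin graph. By \cite[Theorem 6.6]{CG20}, this minimal boundary is parametrized by $\mathfrak{Y}=\{(y_1,y_2,\dots)\in\mathbb{Z}_{\geq0}^\infty\mid 0\leq y_1\leq y_2\leq\cdots\}$. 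Combining the two bijections gives the claimed parametrization.
\end{proof}
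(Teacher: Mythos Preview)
Your proposal is correct and matches the paper's approach exactly: the paper simply states that the corollary is ``a consequence of Theorem \ref{thm:main} and \cite[Theorem 6.6]{CG20},'' and your write-up supplies precisely the obvious glue (an affine homeomorphism preserves extreme points) needed to chain those two results together. No further argument is required.
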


We have the following by Theorem \ref{thm:main} and \cite[Corollary 3.1]{Sato3}. The latter is following from \cite[Proposition 7.20, Corollary 8.3(3)]{Ueda20}.
\begin{corollary}\label{cor:mado}
For $X=B, C$, the $\varprojlim_N (\mathcal{M}_p(\mathbb{S}^+_N), {}_q\Lambda^N_{N-1})$ is a Choquet simplex. Moreover, there exists a homeomorphism from $\varprojlim_N (\mathcal{M}_p(\mathbb{S}^+_N), {}_q\Lambda^N_{N-1})$ to $\mathcal{M}_p(\mathfrak{Y})$.
\end{corollary}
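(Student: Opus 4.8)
The plan is to derive Corollary \ref{cor:mado} by combining the identification in Theorem \ref{thm:main} with an abstract structural result about quantized characters of inductive limit quantum groups. Specifically, once we know that $\mathrm{Ch}(G_q(\infty))$ is affine homeomorphic to $\varprojlim_N(\mathcal{M}_p(\mathbb{S}_N^+),{}_q\Lambda^N_{N-1})$, the problem reduces to showing that $\mathrm{Ch}(G_q(\infty))$ is a Choquet simplex and, moreover, that its set of extreme points, together with the induced topology, is such that the barycenter map is a homeomorphism onto $\mathcal{M}_p(\partial_e)$, where $\partial_e$ is the extreme boundary. For the simplex property I would invoke \cite[Corollary 3.1]{Sato3}: the $\tau$-KMS states at inverse temperature $-1$ on the inductive limit von Neumann algebra $W^*(G_q(\infty))$ form a Choquet simplex. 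Here the input is that $W^*(G_q(\infty))$ is an inductive limit of the finite-dimensional-summand algebras $W^*(G_q(N))$ with the compatible modular-type automorphism groups, so the KMS analysis of Ueda \cite[Proposition 7.20, Corollary 8.3(3)]{Ueda20} applies; this is exactly the situation set up in Section 5. So the first step is simply to record that $X=B,C$ puts us in the hypotheses of \cite[Corollary 3.1]{Sato3}, giving the Choquet simplex conclusion for $\varprojlim_N(\mathcal{M}_p(\mathbb{S}_N^+),{}_q\Lambda^N_{N-1})$ via the homeomorphism of Theorem \ref{thm:main}.

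Next I would identify the extreme boundary. By Theorem \ref{thm:main} and Corollary \ref{cor:para}, the extreme points of $\varprojlim_N(\mathcal{M}_p(\mathbb{S}_N^+),{}_q\Lambda^N_{N-1})$ are in bijection with $\mathfrak{Y}=\{(y_1,y_2,\dots)\in\mathbb{Z}_{\geq0}^\infty\mid 0\leq y_1\leq y_2\leq\cdots\}$. The point is that this bijection is a homeomorphism for the natural topologies: $\mathfrak{Y}$ carries the product topology inherited from $\mathbb{Z}_{\geq0}^\infty$ (equivalently, the topology of coordinatewise convergence of the stabilizing sequences), and the extreme boundary of the projective limit carries the relative topology of component-wise weak convergence. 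That this is a homeomorphism is precisely what \cite[Theorem 6.6]{CG20} provides — the classification there is topological, not merely set-theoretic — so I would cite it for the homeomorphism $\partial_e\varprojlim_N(\mathcal{M}_p(\mathbb{S}_N^+),{}_q\Lambda^N_{N-1})\cong\mathfrak{Y}$.

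Finally I would assemble the homeomorphism $\varprojlim_N(\mathcal{M}_p(\mathbb{S}_N^+),{}_q\Lambda^N_{N-1})\cong\mathcal{M}_p(\mathfrak{Y})$. For a metrizable Choquet simplex $K$ with extreme boundary $\partial_e K$, the barycenter map $\beta\colon\mathcal{M}_p(\overline{\partial_e K})\to K$ restricts to an affine homeomorphism $\mathcal{M}_p(\partial_e K)\to K$ when $\partial_e K$ is a $G_\delta$ (Bauer's theorem gives the homeomorphism onto $K$ when $\partial_e K$ is closed; for the general metrizable simplex one uses that $\beta$ is a continuous affine bijection from $\mathcal{M}_p(\partial_e K)$, with its weak topology, onto $K$, and both are compact metrizable so it is a homeomorphism). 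Thus I need: (i) the projective limit is metrizable — clear, since each $\mathcal{M}_p(\mathbb{S}_N^+)$ is metrizable and there are countably many of them; (ii) $\mathfrak{Y}$ with the product topology is a Polish space and embeds as the extreme boundary via Step 2; (iii) unique barycentric decomposition, which is the Choquet simplex property from Step 1. Combining, $\beta\colon\mathcal{M}_p(\mathfrak{Y})\to\varprojlim_N(\mathcal{M}_p(\mathbb{S}_N^+),{}_q\Lambda^N_{N-1})$ is the desired homeomorphism, and composing with Theorem \ref{thm:main} expresses every quantized character of $G_q(\infty)$ as an integral of extreme ones over a measure on $\mathfrak{Y}$.

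The main obstacle I anticipate is the topological bookkeeping in Step 3 rather than anything conceptually deep: one must check that the weak topology on $\mathcal{M}_p(\mathfrak{Y})$ matches what the barycenter map sees, i.e. that convergence of measures on $\mathfrak{Y}$ corresponds under $\beta$ to component-wise weak convergence in the projective limit. This is where the fact that $\mathfrak{Y}$ is only locally compact (indeed not compact — the sequences $(y_i)$ can grow) matters: one cannot directly quote Bauer's theorem for a closed boundary, so one should instead pass through the general metrizable-simplex statement, or equivalently verify that $\mathfrak{Y}$ is a $G_\delta$ in some compactification of the simplex. Once that is handled the rest is a direct citation chain through \cite{Sato3}, \cite{Ueda20}, and \cite{CG20}.
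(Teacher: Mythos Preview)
Your route coincides with the paper's for the Choquet-simplex claim: both pass through Theorem~\ref{thm:main} and \cite[Corollary~3.1]{Sato3}. For the homeomorphism to $\mathcal{M}_p(\mathfrak{Y})$, however, the paper does not argue via abstract Bauer/Choquet theory; it simply invokes \cite[Proposition~7.20, Corollary~8.3(3)]{Ueda20}, where the homeomorphism between the relevant KMS-state simplex and the space of probability measures on its extreme boundary is established directly in the $C^*$-flow framework. Your attempt to recover this from the barycenter map runs into exactly the obstacle you flag: since $\mathfrak{Y}$ is not compact, $\mathcal{M}_p(\mathfrak{Y})$ is not compact either, so the sentence ``both are compact metrizable so it is a homeomorphism'' is false, and the vaguer fallback (``general metrizable-simplex statement'', ``$G_\delta$ in some compactification'') does not by itself give continuity of the inverse barycenter map---for non-Bauer metrizable simplices that inverse is in general discontinuous. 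Thus the homeomorphism genuinely needs the extra input packaged in \cite{Ueda20} (or an equivalent boundary-theoretic argument specific to this branching-graph setting), and the paper's citation of it is the substance of the step rather than mere bookkeeping.
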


We give a possible analysis of $\mathrm{Ch}(SO_q(2\infty))$ based on the work by Cuenca and Gorin \cite{CG20}. Let $\mathcal{A}$ be the set of all quantized characters $\chi$ in $\mathrm{Ch}(SO_q(2\infty))$ such that for any $N$ and $\mu\in\mathbb{S}^+_N$
\[\frac{P^\chi_N(\lambda)}{f^{D_N}_\lambda(q^{N-1},\dots, q, 1)}=\frac{P^\chi_N(\tilde{\lambda})}{f^{D_N}_{\tilde{\lambda}}(q^{N-1},\dots, q,1)},\]
where $\chi\Theta^\infty_N=\sum_{\lambda\in P(SO_q(2N))}P^\chi_N(\lambda)\chi_\lambda\in\mathrm{Ch}(SO_q(2N))$. By a similar proof of Theorem \ref{thm:main} and Corollary \ref{cor:para}, we have the following:
\begin{proposition}
There exists the affine homeomorphism from $\mathcal{A}$ to $\varprojlim_N(\mathcal{M}_p(\mathbb{S}_N^+), {}_q\Lambda^N_{N-1})$ for $X=D$ such that the coherent system $(m_N)_N$ corresponding to $\chi\in \mathrm{Ch}(SO_q(2\infty))$ is given as $\chi\Theta^N_\infty=\sum_{\lambda\in P(G_q(N))}m_N(\lambda)\chi_\lambda$ for any $N$. Moreover, there exists a bijection between $\mathrm{ex}(\mathrm{Ch}(SO_q(2\infty)))$ and $\mathfrak{Y}$.
\end{proposition}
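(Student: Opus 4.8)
The plan is to mimic the proof of Theorem~\ref{thm:main} while restricting attention to the sub-simplex $\mathcal{A}$. First I would observe that for $\chi\in\mathcal{A}$ and the associated decomposition $\chi\Theta^\infty_N=\sum_{\lambda\in P(SO_q(2N))}P^\chi_N(\lambda)\chi_\lambda$, the defining condition says precisely that $P^\chi_N(\lambda)/f^{D_N}_\lambda(q^{N-1},\dots,1)$ depends only on the unordered pair $\{\lambda,\tilde\lambda\}$, hence factors through $\mathbb{S}^+_N$. Thus setting $m_N(\mu):=P^\chi_N(\lambda)+P^\chi_N(\tilde\lambda)$ for $\mu\in\mathbb{S}^+_N$ (with the convention $P^\chi_N(\tilde\lambda)=0$ whenever $\lambda_N=0$, so $\lambda=\tilde\lambda$) gives a probability measure on $\mathbb{S}^+_N$, and from Proposition~\ref{prop:contact} together with the Remark relating $f^{D_N}_\lambda+f^{D_N}_{\tilde\lambda}$ to $g^{D_N}_\lambda$, the value $\chi(\kappa_N(z_1,\dots,z_N))$ equals $\sum_{\mu\in\mathbb{S}^+_N}m_N(\mu)\,g^{D_N}_\mu(z_1,\dots,z_N)$. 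So $\chi$ is recovered from $(m_N)_N$ via the generating functions $g^{D_N}$, exactly as in the type $B,C$ situation.

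Next I would establish that the $(m_N)_N$ so obtained form a coherent system, i.e. $m_N{}_q\Lambda^N_{N-1}=m_{N-1}$. This is forced by the compatibility $\chi\Theta^\infty_{N-1}=(\chi\Theta^\infty_N)\circ\Theta_{N-1}$ at the level of the $W^*$-algebras, combined with Lemma~\ref{lem:yuki} (which computes $\Theta_{N-1}(\kappa_{N-1}(z))=\kappa_N(z,1)$) and the defining relation $g^{D_N}_\lambda(z_1,\dots,z_{N-1},1)=\sum_\mu {}_q\Lambda^N_{N-1}(\lambda,\mu)g^{D_{N-1}}_\mu(z_1,\dots,z_{N-1})$ coming from \cite[Proposition~2.5]{CG20}. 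One has to check that the map $\chi\mapsto(m_N)_N$ is injective (the $\kappa_N$ generate a weakly dense set, and two quantized characters agreeing on the $\kappa_N(\mathbb{T}^N)$ for all $N$ coincide by the argument of \cite[Proposition~2.6]{Sato1}), surjective (given a coherent system, reconstruct the $\chi\Theta^\infty_N$ as $\sum_\lambda P^\chi_N(\lambda)\chi_\lambda$ with $P^\chi_N(\lambda):=m_N(\mu)\cdot f^{D_N}_\lambda/(f^{D_N}_\lambda+f^{D_N}_{\tilde\lambda})$ evaluated at $(q^{N-1},\dots,1)$, check these are consistent under $\Theta_N$ and assemble a normal $\tau$-KMS state on the inductive limit), affine, and bicontinuous for the stated topologies --- all of which run parallel to \cite[Proposition~2.1]{Sato1} and \cite[Proposition~2.6]{Sato1}.

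For the second assertion I would combine the affine homeomorphism just constructed with Corollary~\ref{cor:para}: the extreme points of $\varprojlim_N(\mathcal{M}_p(\mathbb{S}^+_N),{}_q\Lambda^N_{N-1})$ are parametrized by $\mathfrak{Y}$ via \cite[Theorem~6.6]{CG20}, and an affine homeomorphism carries extreme points to extreme points, giving a bijection $\mathrm{ex}(\mathcal{A})\leftrightarrow\mathfrak{Y}$. The remaining point is to upgrade ``$\mathrm{ex}(\mathcal{A})$'' to ``$\mathrm{ex}(\mathrm{Ch}(SO_q(2\infty)))$'' --- that is, to argue that every extreme quantized character of $SO_q(2\infty)$ already lies in $\mathcal{A}$. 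Here I would use that $\mathcal{A}$ is a closed face of the Choquet simplex $\mathrm{Ch}(SO_q(2\infty))$: the condition defining $\mathcal{A}$ is linear in $\chi$ and closed, and if $\chi=\tfrac12(\chi_1+\chi_2)\in\mathcal{A}$ then each $\chi_i$ satisfies it as well by positivity of the $P^{\chi_i}_N$, so $\mathcal{A}$ is a face; being itself a (closed) simplex, its extreme points are extreme in the ambient simplex, and one then invokes the symmetry $\lambda\leftrightarrow\tilde\lambda$ of the whole setup to see that no extreme character of $SO_q(2\infty)$ can sit outside $\mathcal{A}$ --- an extreme character must be ``$\lambda$--$\tilde\lambda$ symmetric'' because the outer automorphism induced by $\Theta_N$-compatible flip would otherwise split it.

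The main obstacle I anticipate is precisely this last point: showing $\mathrm{ex}(\mathrm{Ch}(SO_q(2\infty)))\subseteq\mathcal{A}$. In the classical $SO(2\infty)$ theory this is handled because $SO(2N-1)\subset SO(2N)\subset SO(2N+1)$ interleave and the relevant branching collapses the $\mathbb{Z}/2$ ambiguity; in the quantum $B$/$D$ setting those intermediate inclusions are unavailable (cf. the Remark following the definition of $SO_q(2\infty)$), so one cannot simply quote the classical argument. One would need either a direct $W^*$-algebraic argument that the flip $\lambda\mapsto\tilde\lambda$ is implemented compatibly with all the $\Theta_N$ and that KMS states are automatically invariant under it in the extreme case, or else accept --- as the statement implicitly does --- only the weaker conclusion that $\mathrm{ex}(\mathcal{A})\cong\mathfrak{Y}$ and that $\mathcal{A}$ (rather than all of $\mathrm{Ch}(SO_q(2\infty))$) is governed by the $BC$ type $q$-Gelfand--Tsetlin boundary. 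The rest of the proof is bookkeeping: transporting the properties in \cite[Proposition~2.1, Proposition~2.6]{Sato1} and Corollary~\ref{cor:para} through the dictionary $\chi\leftrightarrow(m_N)_N$.
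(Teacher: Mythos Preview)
Your treatment of the first assertion is correct and is precisely what the paper intends: the paper's entire proof is the single sentence ``By a similar proof of Theorem~\ref{thm:main} and Corollary~\ref{cor:para}'', and your outline (pass from $P^\chi_N$ on $P(SO_q(2N))$ to $m_N$ on $\mathbb{S}^+_N$ via the $\lambda\leftrightarrow\tilde\lambda$ identification, use Lemma~\ref{lem:yuki} together with \cite[Proposition~2.5]{CG20} for coherence, and then invoke \cite[Propositions~2.1,~2.6]{Sato1} for the affine homeomorphism) simply unpacks that reference.

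For the second assertion you have correctly isolated a genuine issue that the paper does not address. The reference to Corollary~\ref{cor:para} yields only $\mathrm{ex}(\mathcal{A})\cong\mathfrak{Y}$ via the affine homeomorphism and \cite[Theorem~6.6]{CG20}; upgrading this to $\mathrm{ex}(\mathrm{Ch}(SO_q(2\infty)))\cong\mathfrak{Y}$ would require the inclusion $\mathrm{ex}(\mathrm{Ch}(SO_q(2\infty)))\subseteq\mathcal{A}$, which the paper never argues. Your proposed face argument for this inclusion is, however, incorrect: the defining condition $P^\chi_N(\lambda)/f^{D_N}_\lambda(q^{N-1},\dots,1)=P^\chi_N(\tilde\lambda)/f^{D_N}_{\tilde\lambda}(q^{N-1},\dots,1)$ is a linear \emph{equality}, so $\mathcal{A}$ is the intersection of $\mathrm{Ch}(SO_q(2\infty))$ with a closed affine subspace, not a face. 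Concretely, a suitable convex combination of $\chi_\lambda$ and $\chi_{\tilde\lambda}$ (pushed up from level $N$) lies in $\mathcal{A}$ while neither summand does, so ``positivity of the $P^{\chi_i}_N$'' gives nothing here. Your alternative idea via an outer automorphism implementing $\lambda\mapsto\tilde\lambda$ is more plausible in spirit but also not immediate, since $\sigma$-invariance would give $P^\chi_N(\lambda)=P^\chi_N(\tilde\lambda)$ rather than the ratio condition defining $\mathcal{A}$. In short: the paper supplies no argument for this step, your face argument does not work, and the paragraph following the proposition (which discusses $\mathrm{ex}(\mathcal{A})$ rather than $\mathrm{ex}(\mathrm{Ch}(SO_q(2\infty)))$) strongly suggests the statement as printed is a slip for $\mathrm{ex}(\mathcal{A})$.
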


We comment on a description of extreme quantized characters. Any extreme quantized characters of inductive limit quantum groups relate to ergodic dynamical systems, and they can be approximated by irreducible quantized characters of finite rank quantum subgroups (see \cite[Section 2.5]{Sato1}). Actually, Cuenca and Gorin have done such an approximation, and hence we obtain explicit descriptions of extreme quantized characters of $SO_q(2\infty+1)$ and $Sp_q(\infty)$ (see \cite[Theorem 3.13]{CG20}). Moreover, quantized characters of $SO_q(2\infty)$ in $\mathrm{ex}(\mathcal{A})$ also admit an approximation approach. However, to the best of the author's knowledge, a representation-theoretic meaning of quantized characters in $\mathrm{ex}(\mathcal{A})$ and whether they relate to ergodic dynamical systems are unknown.

\section{Markov semigroups on unitary duals}
In this section, we discuss $BC$ analogs of Markov semigroups that we study for the type $A$ in \cite{Sato4}. First, we propose distinguished quantized characters of $SO_q(2N+1)$, $Sp_q(N)$. Let $\Omega$ be the subset of $\omega=(\alpha,\beta,\gamma)\in\mathbb{R}_{\geq 0}^\infty\times\mathbb{R}_{\geq0}^\infty\times\mathbb{R}_{\geq0}$ satisfying that 
\[\alpha=(\alpha_1\geq\alpha_2\geq\cdots),\quad\beta=(\beta_1\geq\beta_2\geq\cdots),\quad \gamma\geq0,\quad \beta_1\leq1.\]
For any $\omega\in \Omega$ we define 
\[\Phi_\omega(x):=e^{\gamma(x-1)}\prod_{i=1}^\infty\frac{1+\beta_i(1-\frac{\beta_i}{2})(x-1)}{1-\alpha_i(1+\frac{\alpha_i}{2})(x-1)},\]
\[\Psi_\omega(z):=\Phi_\omega\left(\frac{z+z^{-1}}{2}\right)=e^{\gamma\left(\frac{z+z^{-1}}{2}-1\right)}\prod_{i=1}^\infty\frac{1+\frac{\beta_i}{2}(z-1)}{1-\frac{\alpha_i}{2}(z-1)}\frac{1+\frac{\beta_i}{2}(z^{-1}-1)}{1-\frac{\alpha_i}{2}(z^{-1}-1)}.\]
Then we define $\varphi_\omega(n)$ by $\Phi_\omega(x)=\sum_{n=0}^\infty\varphi_\omega(n)x^n$. We remark that for any $\omega\in \Omega$ the functions $\Psi_\omega$ give extreme characters of $SO(\infty)$ and $Sp(\infty)$. See \cite{OO06} for more details.

Let $G_q(N)=SO_q(2N+1), Sp_q(N)$ and $X=B, C$, respectively. We recall that the constants $c(X)_i$ are define by Equation \eqref{eq:daniels} for $i\geq1$, and let $\kappa_N\colon \mathbb{T}^N\to W^*(G_q(N))$ be as is in Section 4. For $i=1,\dots, N$ we define $q^{X}_i:=(q^{c(X)_i}+q^{-c(X)_i})/2$.
\begin{lemma}\label{lem:goro}
For any $N\geq1$ let $\omega\in \Omega$ such that $\Psi_\omega(q^{c(X)_i})>0$ for $i=1,\dots, N$ and the Taylor expansion of $\Phi_\omega(x)$ converges in annulus containing $q^{X}_1,\dots, q^{X}_N$. Then there exists the quantized character $\chi_{\omega, N}\in\mathrm{Ch}(G_q(N))$ such that
\[\chi_{\omega,N}(\kappa_N(z_1,\dots, z_N))=\prod_{i=1}^N\frac{\Psi_\omega(q^{c(X)_i}z_i)}{\Psi_\omega(q^{c(X)_i})}\]
for any $(z_1,\dots, z_N)\in\mathbb{T}^N$. 
\end{lemma}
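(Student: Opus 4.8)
The plan is to realize $\chi_{\omega,N}$ as a convex combination (in fact an integral) of the irreducible quantized characters $\chi_\lambda$ of $G_q(N)$ and then check that the resulting state is a genuine quantized character by verifying the coefficients are nonnegative and sum to one. Concretely, from Proposition \ref{prop:contact} we know $\chi_\lambda(\kappa_N(z_1,\dots,z_N))=g^{X_N}_\lambda(z_1,\dots,z_N)$, so the putative identity to be proved is that the symmetric Laurent polynomial
\[
\prod_{i=1}^N\frac{\Psi_\omega(q^{c(X)_i}z_i)}{\Psi_\omega(q^{c(X)_i})}
\]
in $z_1,\dots,z_N$ expands as $\sum_{\lambda\in\mathbb{S}_N^+}P_N(\lambda)\,g^{X_N}_\lambda(z_1,\dots,z_N)$ with $P_N(\lambda)\ge 0$ and $\sum_\lambda P_N(\lambda)=1$; once this is established, the functional $\chi_{\omega,N}:=\sum_\lambda P_N(\lambda)\chi_\lambda$ is by construction a normal state that is a convex combination of quantized characters, hence itself a quantized character (a $\widehat\tau^{G_q(N)}$-KMS state of inverse temperature $-1$), and it has the stated value on $\kappa_N$.

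First I would set up the combinatorial identity. Since $\Psi_\omega(z)=\Phi_\omega((z+z^{-1})/2)$ and $\Phi_\omega(x)=\sum_{n\ge0}\varphi_\omega(n)x^n$, the function $z\mapsto\Psi_\omega(q^{c(X)_i}z)/\Psi_\omega(q^{c(X)_i})$ is a convergent series in the variables $w=(z+z^{-1})/2$ (this is where the hypothesis that the Taylor expansion converges in an annulus containing $q^X_1,\dots,q^X_N$ enters: it guarantees the product over $i$ converges to an honest symmetric function on $\mathbb{T}^N$ and can be rearranged). The key algebraic input is the branching/expansion rule for the characters $g^{X_N}_\lambda$: the one-variable character values $g^{X_1}_{(\ell)}(z)$ are, up to normalization, the Chebyshev-type polynomials $\bigl(z^{\ell+\epsilon(X)}-z^{-\ell-\epsilon(X)}\bigr)/\bigl(z^{\epsilon(X)}-z^{-\epsilon(X)}\bigr)$ appearing in Lemma \ref{lem:miya}, which form a basis of symmetric Laurent polynomials, and one has a Pieri-type rule expressing a product of such with the relevant one-row characters. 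This is exactly the content of the stability computations of Okounkov--Olshanski \cite{OO06} for $SO(\infty)$ and $Sp(\infty)$: the $\Psi_\omega$ are the generating functions whose coefficients against the irreducible characters are the multiplicities $P_N(\lambda)\ge0$ of the (classical) character $\Psi_\omega$ restricted to $SO(2N+1)$ or $Sp(N)$. Because $g^{X_N}_\lambda$ is literally the classical normalized irreducible character evaluated at $(q^{c(X)_1}z_1,\dots,q^{c(X)_N}z_N)$, the classical nonnegativity of these multiplicities transfers verbatim; positivity of the denominators $\Psi_\omega(q^{c(X)_i})>0$ is assumed so that the normalization makes sense, and normalization $\sum_\lambda P_N(\lambda)=1$ follows by evaluating at $z_1=\dots=z_N=1$.

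The main obstacle is making precise the passage from the classical identity to the $q$-deformed statement while handling convergence: the classical character $\Psi_\omega$ of $SO(\infty)$ or $Sp(\infty)$ decomposes, on restriction to the rank-$N$ group, as an \emph{infinite} nonnegative combination of irreducibles, so one must argue that substituting the spectral parameters $z_i\mapsto q^{c(X)_i}z_i$ (with $|q^{c(X)_i}|<1$) keeps the series absolutely convergent and termwise identifiable — this is where the annulus hypothesis does the real work, ensuring the coefficients $P_N(\lambda)$ decay fast enough that $\sum_\lambda P_N(\lambda)\chi_\lambda$ converges $\sigma$-weakly in $W^*(G_q(N))$ to a normal state. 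I would handle this by invoking the explicit product formula for the branching of $\Psi_\omega$ (again from \cite{OO06}), which exhibits $P_N(\lambda)$ as a determinant/product with manifest nonnegativity and exponential decay in $|\lambda|$, and then note that normality and the KMS property of the limit follow from the fact that each $\chi_\lambda$ is a normal quantized character (Remark after the definition of $\mathrm{Ch}(G)$) together with the $\sigma$-weak convergence; the evaluation formula on $\kappa_N$ is then immediate from Proposition \ref{prop:contact} and the generating-function identity. A secondary technical point is that for $X=B$ one works with $U_{q^{1/2}}\mathfrak{so}_{2N+1}$, so the constants $c(B)_i=i-1/2$ are half-integers and the relevant Laurent polynomials involve half-integer powers; this is purely notational and already accommodated by the formulas in Lemma \ref{lem:miya} and the definition of $g^{X_N}_\lambda$.
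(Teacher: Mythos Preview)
Your proposal is correct and follows essentially the same route as the paper: reduce to showing that $\prod_i \Psi_\omega(q^{c(X)_i}z_i)/\Psi_\omega(q^{c(X)_i})$ expands nonnegatively in the $g^{X_N}_\lambda$, then define $\chi_{\omega,N}$ as the corresponding convex combination of irreducible quantized characters. The paper carries out the expansion explicitly---a Cauchy-type Vandermonde manipulation in the variables $x_i=(q^{c(X)_i}z_i+q^{-c(X)_i}z_i^{-1})/2$ yields $P_N(\lambda)$ as $\det[\varphi_\omega(\lambda_j-j+i)]$ times a positive Jacobi-polynomial factor (via \cite[Proposition~7.1]{OO06}), with nonnegativity of the determinant coming from total positivity of the Voiculescu function $\Phi_\omega$---which is exactly the ``explicit product formula'' you allude to; so your sketch and the paper's argument differ only in the level of computational detail.
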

\begin{proof}
Since any convex combinations of irreducible quantized characters define quantized characters (see \cite[Lemma 2.2]{Sato1}), by Proposition \ref{prop:contact}, it suffices to show that the right-hand side coincides with a convex combination of the functions of the form
\[g^{X_N}_\lambda(z_1,\dots,z_N):=\frac{f^{X_N}_\lambda(q^{c(X)_1}z_1,\dots,q^{c(X)_N}z_N)}{f^{X_N}_\lambda(q^{c(X)_1},\dots,q^{c(X)_N})}\]
for any $\lambda\in\mathbb{S}^+_N$. Let $x_i=(q^{c(X)_i}z_i+q^{-c(X)_i}z_i^{-1})/2$ for $i=1,\dots,N$. Then we have
\begin{align*}
V(x_1,\dots, x_N)\prod_{i=1}^N\Psi_\omega(q^{c(X)_i}z_i)
&=\sum_{\sigma\in S(N)}\sum_{n_1,\dots,n_N\geq0}\mathrm{sgn}(\sigma)\prod_{i=1}^N\varphi_\omega(n_i)x_i^{n_j+N-\sigma(i)}\\
&=\sum_{n_1,\dots,n_N\geq0}\det[\varphi_\omega(n_j+i)]_{i, j=1}^N\prod_{i=1}^Nx_i^{n_i+N}\\
&=\sum_{n_1>\dots>n_N\geq0}\sum_{\sigma\in S(N)}\det[\varphi_\omega(n_{\sigma(j)}+i)]_{i, j=1}^N\prod_{i=1}^Nx_i^{n_{\sigma(i)}+N}\\
&=\sum_{n_1>\dots>n_N\geq0}\det[\varphi_\omega(n_j+i)]_{i, j=1}^N\det\left[x_i^{n_{j}+N}\right]_{i, j=1}^N\\
\end{align*}
Therefore, we have 
\[\prod_{i=1}^N\Psi_\omega(q^{c(X)_i}z_i)=\sum_{\lambda\in\mathbb{S}^+_N}\det[\varphi_\omega(\lambda_j-j+i)]_{i, j=1}^N\frac{\det\left[x_i^{\lambda_j-j+N}\right]_{i, j=1}^N}{V(x_1,\dots,x_N)}.\]
Let $\mathfrak{P}_\lambda(x_1,\dots, x_N; a, b)$ be the multivariate Jacobi polynomial with the label $\lambda$ (see \cite[Section 7]{OO06}), where $a$, $b$ are parameters given as $a=\pm1/2$, $b=1/2$ for $X=B, C$, respectively. Then we have
\[\frac{\det\left[x_i^{\lambda_j-j+N}\right]_{i, j=1}^N}{V(x_1,\dots,x_N)}=\frac{\mathfrak{P}_\lambda(x_1,\dots, x_N; a, b)}{\prod_{i=1}^N\varkappa(\lambda_i-i+N; a, b)},\]
where 
\[\varkappa(l; a, b):=\frac{\Gamma(2l+a+b+1)}{2^ll!\Gamma(l+a+b+1)}.\]
Moreover, by \cite[Proposition 7.1]{OO06}, we have
\[g^{X_N}_\lambda(z_1,\dots, z_N)=\frac{\mathfrak{P}_\lambda(x_1,\dots, x_N; a, b)}{\mathfrak{P}_\lambda(q^{X}_1,\dots, q^{X}_N; a, b)},\]
Therefore, we have 
\[\prod_{i=1}^N\Psi_\omega(q^{c(X)_i}z_i)=\sum_{\lambda\in\mathbb{S}^+_N}\det[\varphi_\omega(\lambda_j-j+i)]_{i, j=1}^N\frac{\mathfrak{P}_\lambda(q^{X}_1,\dots, q^{X}_N; a, b)}{\prod_{i=1}^N\varkappa(\lambda_i-i+N; a, b)}g^{X_N}_\lambda(z_1,\dots,z_N).\]
Then, by definition, $\prod_{i=1}^N\varkappa(\lambda_i-i+N; a, b)>0$. By \cite[Proposition 7.1]{OO06}, $\mathfrak{P}_\lambda(q^{X_N}_1,\dots, q^{X_N}_N; a, b)$ is equal to $f^{X_N}_\lambda(q^{c(X_N)_1},\dots, q^{c(X_N)_N})=\mathrm{Tr}_{V_\lambda}(T_\lambda(K_{2\rho}))$ times a positive constant. Thus, we have $\mathfrak{P}_\lambda(q^{X}_1,\dots, q^{X}_N; a, b)>0$. Moreover, since $\Phi_\omega(x)$ is a special case of the Voiculescu functions, it is known that $\det[\varphi_\omega(\lambda_j-j+i)]_{i, j=1}^N\geq0$ for any $N\geq1$ and $\lambda\in\mathbb{S}^+_N$. Therefore, $\prod_{i=1}^N\Psi_\omega(q^{c(X)_i}z_i)$ is a linear combination of the $g^{X_N}_\lambda(z_1,\dots, z_N)$ with nonnegative coefficients, and the summation of coefficients is equal to $\prod_{i=1}^N\Psi_\omega(q^{c(X)_i})$. Namely, $\prod_{i=1}^N\Psi_\omega(q^{c(X)_i}z_i)/\Psi_\omega(q^{c(X)_i})$ is a convex combination of the $g^{X_N}_\lambda(z_1,\dots, z_N)$.
\end{proof}

By the above lemma, we have the decomposition 
\[\chi_{\omega, N}(\kappa_N(z_1,\dots,z_N))=\sum_{\lambda\in\mathbb{S}^+_N}P_{\chi_{\omega, N}}(\lambda)g^{X_N}_\lambda(z_1,\dots,z_N),\]
where $P_{\chi_{\omega, N}}(\lambda)\geq0$ and $\sum_{\lambda\in\mathbb{S}^+_N}P_{\chi_{\omega, N}}(\lambda)=1$. By the above proof, for any $\lambda\in\mathbb{S}^+_N$ we have
\[P_{\chi_{\omega, N}}(\lambda)=\frac{\det[\varphi_\omega(\lambda_j-j+i)]_{i, j=1}^N}{\prod_{i=1}^N\Psi_\omega(q^{c(X)_i})}\frac{\mathfrak{P}_\lambda(q^{X}_1,\dots, q^{X}_N; a, b)}{\prod_{i=1}^N\varkappa(\lambda_i-i+N; a, b)}.\]

Let $G_q(\infty)=SOq(2\infty+1), Sp_q(\infty)$ and $\Theta^\infty_N\colon W^*(G_q(N))\to W^*(G_q(\infty))$ as is in Section 5. The following is a consequence of the above lemma.
\begin{proposition}\label{prop:char_infty}
Let $\omega\in \Omega$ such that $\Psi_\omega(q^{c(X)_i})>0$ for any $i\geq1$ and the Taylor expansion of $\Phi_\omega(x)$ converges in annulus containing $q^X_i$ for any $i\geq1$. Then there exists a quantized character $\chi_\omega\in\mathrm{Ch}(G_q(\infty))$ such that $\chi_\omega\Theta^\infty_N=\chi_{\omega, N}$ for any $N\geq1$.
\end{proposition}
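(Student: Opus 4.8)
The plan is to deduce this directly from Theorem~\ref{thm:main}. That theorem gives, for $G_q(\infty)=SO_q(2\infty+1),Sp_q(\infty)$, an affine homeomorphism $\mathrm{Ch}(G_q(\infty))\xrightarrow{\sim}\varprojlim_N(\mathcal{M}_p(\mathbb{S}^+_N),{}_q\Lambda^N_{N-1})$ under which the coherent system $(m_N)_N$ attached to a quantized character $\chi$ satisfies $\chi\Theta^\infty_N=\sum_{\lambda\in P(G_q(N))}m_N(\lambda)\chi_\lambda$. By Lemma~\ref{lem:goro} the hypothesis of the proposition guarantees, for \emph{every} $N\geq1$, the existence of $\chi_{\omega,N}\in\mathrm{Ch}(G_q(N))$ together with its expansion $\chi_{\omega,N}(\kappa_N(\,\cdot\,))=\sum_{\lambda\in\mathbb{S}^+_N}P_{\chi_{\omega,N}}(\lambda)g^{X_N}_\lambda(\,\cdot\,)$; since every quantized character is a convex combination of irreducible ones, this means $\chi_{\omega,N}=\sum_\lambda P_{\chi_{\omega,N}}(\lambda)\chi_\lambda$ with $m_N:=P_{\chi_{\omega,N}}\in\mathcal{M}_p(\mathbb{S}^+_N)$. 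Hence it suffices to check that $(m_N)_N$ is a coherent system, i.e.\ $m_N\,{}_q\Lambda^N_{N-1}=m_{N-1}$: the quantized character $\chi_\omega\in\mathrm{Ch}(G_q(\infty))$ corresponding to it then obeys $\chi_\omega\Theta^\infty_N=\sum_\lambda m_N(\lambda)\chi_\lambda=\chi_{\omega,N}$ for all $N$, which is the assertion.

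For the coherence, the first step is the general identity that, for any $\chi=\sum_\lambda m(\lambda)\chi_\lambda\in\mathrm{Ch}(G_q(N))$,
\[\chi\bigl(\Theta_{N-1}(\kappa_{N-1}(z_1,\dots,z_{N-1}))\bigr)=\chi\bigl(\kappa_N(z_1,\dots,z_{N-1},1)\bigr)=\sum_{\mu\in\mathbb{S}^+_{N-1}}\bigl(m\,{}_q\Lambda^N_{N-1}\bigr)(\mu)\,g^{X_{N-1}}_\mu(z_1,\dots,z_{N-1}),\]
obtained by chaining Lemma~\ref{lem:yuki}, Proposition~\ref{prop:contact}, and the defining relation of ${}_q\Lambda^N_{N-1}$. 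Applying this with $\chi=\chi_{\omega,N}$ and evaluating the product formula of Lemma~\ref{lem:goro} at $(z_1,\dots,z_{N-1},1)$ — where the last factor $\Psi_\omega(q^{c(X)_N}\!\cdot 1)/\Psi_\omega(q^{c(X)_N})$ equals $1$ — gives
\[\sum_{\mu\in\mathbb{S}^+_{N-1}}\bigl(m_N\,{}_q\Lambda^N_{N-1}\bigr)(\mu)\,g^{X_{N-1}}_\mu(z_1,\dots,z_{N-1})=\prod_{i=1}^{N-1}\frac{\Psi_\omega(q^{c(X)_i}z_i)}{\Psi_\omega(q^{c(X)_i})}=\chi_{\omega,N-1}(\kappa_{N-1}(z_1,\dots,z_{N-1})),\]
whose right-hand side is $\sum_{\mu}m_{N-1}(\mu)\,g^{X_{N-1}}_\mu(z_1,\dots,z_{N-1})$, for all $(z_1,\dots,z_{N-1})\in\mathbb{T}^{N-1}$. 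For $X=B,C$ the functions $g^{X_{N-1}}_\mu$ ($\mu\in\mathbb{S}^+_{N-1}$) are, up to positive scalars, the restricted irreducible characters $\chi_\mu\circ\kappa_{N-1}$, hence linearly independent (e.g.\ from the determinantal formulas of Lemma~\ref{lem:miya}); comparing coefficients yields $m_N\,{}_q\Lambda^N_{N-1}=m_{N-1}$, completing the verification that $(m_N)_N$ is a coherent system, and with it the proposition.

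I do not anticipate a substantive obstacle: the argument is essentially a chase through Lemma~\ref{lem:goro}, Lemma~\ref{lem:yuki}, Proposition~\ref{prop:contact}, and Theorem~\ref{thm:main}. The one place deserving a word of care is the linear independence of $\{g^{X_{N-1}}_\mu\}$, used to pass from equality of functions on $\mathbb{T}^{N-1}$ to equality of measures (equivalently, injectivity of $m\mapsto\sum m(\lambda)\chi_\lambda$ at level $N-1$), together with the observation that only $X=B,C$ occur here so that the failure of irreducibility of $g^{D}$ is irrelevant; and a secondary point worth stating explicitly is simply that Lemma~\ref{lem:goro} applies at \emph{every} level $N$, which is exactly why the proposition's hypothesis is phrased for all $i\geq1$ rather than up to a fixed $N$.
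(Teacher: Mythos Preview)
Your proof is correct and follows essentially the same route as the paper's: invoke Lemma~\ref{lem:goro} at every level $N$, verify that the resulting $(\chi_{\omega,N})_N$ (equivalently, the measures $m_N=P_{\chi_{\omega,N}}$) form a coherent system, and then apply Theorem~\ref{thm:main}. The paper's own proof simply asserts that the coherence ``is clear'' from the product form of $\chi_{\omega,N}(\kappa_N(\,\cdot\,))$, whereas you spell out that step via Lemma~\ref{lem:yuki}, Proposition~\ref{prop:contact}, and the linear independence of the $g^{X_{N-1}}_\mu$; this is a welcome elaboration but not a different argument.
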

\begin{proof}
By assumption, for any $N$ the functions $\prod_{i=1}^N\Psi_\omega(q^{c(X)_i}z_i)/\Psi_\omega(q^{c(X)_i})$ give quantized characters $\chi_{\omega, N}\in\mathrm{Ch}(G_q(N))$. Moreover, it is clear that the sequence $(\chi_{\omega, N})_N$ gives a coherent system of the $BC$ type $q$-Gelfand--Tsetlin graph. Therefore, by Theorem \ref{thm:main}, there exists a quantized character $\chi_\omega\in\mathrm{Ch}(G_q(\infty))$ such that $\chi_\omega\Theta^\infty_N=\chi_{\omega, N}$ for any $N\geq1$.
\end{proof}

For any quantized character of $G_q(N)$ we obtain Markov semigroups on $\widehat{G_q(N)}\cong \mathbb{S}^+_N$ applying a general theory in \cite{Sato4}.
\begin{theorem}\label{thm:markov}
For any $N\geq1$ let $\omega\in \Omega$ satisfying the assumption in Lemma \ref{lem:goro}. Then there exists a Markov semigroup on $\mathbb{S}^+_N$ with the generator $\mathbb{L}_{\chi_{\omega, N}}$ given by
\begin{align*}
\mathbb{L}_{\chi_{\omega, N}}(\lambda, \mu)
=\frac{f_\mu(q^{c(X)_1},\dots, q^{c(X)_N})}{f_\lambda(q^{c(X)_1},\dots, q^{c(X)_N})}\frac{\det\left[\oint_{|z|=1}\Psi_\omega(z)(z^{l_i}-z^{-l_i})\overline{(z^{m_j}-z^{-m_j})}\frac{dz}{2\pi\mathrm{i}z}\right]_{i, j=1}^N}{2^N\prod_{j=1}^N\Psi_\omega(q^{c(X_N)_1})}-\delta_{\lambda, \mu}
\end{align*}
for any $\lambda, \mu\in\mathbb{S}^+_N$, where $l_i:=\lambda_i+c(X)_{N-i+1}$ and $m_i:=\mu_i+c(X)_{N-i+1}$ for $i=1,\dots, N$.
\end{theorem}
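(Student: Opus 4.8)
The plan is to recognize $\mathbb L_{\chi_{\omega,N}}$ as the Markov generator produced from the quantized character $\chi_{\omega,N}$ by the general construction of \cite{Sato4}, and then to make its entries explicit by a determinantal identity. Recall from \cite{Sato4} that to $\chi\in\mathrm{Ch}(G_q(N))$ one attaches the generator $\mathbb L_\chi=\mathcal P_\chi-I$ on $\widehat{G_q(N)}\cong\mathbb S_N^+$, where the matrix $\mathcal P_\chi$ is characterized by
\[
\chi\bigl(\kappa_N(z_1,\dots,z_N)\bigr)\,g^{X_N}_\lambda(z_1,\dots,z_N)=\sum_{\mu\in\mathbb S_N^+}\mathcal P_\chi(\lambda,\mu)\,g^{X_N}_\mu(z_1,\dots,z_N),\qquad(z_1,\dots,z_N)\in\mathbb T^N .
\]
This matrix is stochastic: writing $\chi(\kappa_N(z))=\sum_\nu c_\nu\,g^{X_N}_\nu(z)$ with $c_\nu\ge0$, $\sum_\nu c_\nu=1$ (Proposition \ref{prop:contact} and \cite[Lemma 2.2]{Sato1}), and using the fusion identity $g^{X_N}_\nu\,g^{X_N}_\lambda=\sum_\mu m^\mu_{\nu\lambda}\,\dfrac{f^{X_N}_\mu(q^{c(X)_1},\dots,q^{c(X)_N})}{f^{X_N}_\nu(q^{c(X)_1},\dots,q^{c(X)_N})\,f^{X_N}_\lambda(q^{c(X)_1},\dots,q^{c(X)_N})}\,g^{X_N}_\mu$ with $m^\mu_{\nu\lambda}\in\mathbb Z_{\ge0}$ the tensor product multiplicities of $G_q(N)$, one gets $\mathcal P_\chi(\lambda,\mu)\ge0$ by linear independence of the $g^{X_N}_\mu$, and $\sum_\mu\mathcal P_\chi(\lambda,\mu)=1$ by evaluating at $(z_1,\dots,z_N)=(1,\dots,1)$. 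Hence $e^{t\mathbb L_{\chi_{\omega,N}}}=e^{-t}\sum_{k\ge0}\frac{t^k}{k!}\mathcal P_{\chi_{\omega,N}}^{\,k}$ is automatically a Markov semigroup, and the task reduces to computing the entries of $\mathcal P_{\chi_{\omega,N}}$.

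Fix $X=B$ or $C$ and abbreviate $w_j:=q^{c(X)_j}z_j$. By the Weyl character formula \eqref{eq:sky}, for $l_i:=\lambda_i+c(X)_{N-i+1}$ and the $\lambda$-independent factor $D(z_1,\dots,z_N):=V^s(z_1,\dots,z_N)\prod_{j=1}^N\bigl(z_j^{\epsilon(X)}-z_j^{-\epsilon(X)}\bigr)$ one has $f^{X_N}_\lambda(z_1,\dots,z_N)\,D(z_1,\dots,z_N)=\det\bigl[z_j^{l_i}-z_j^{-l_i}\bigr]_{i,j=1}^N$. First I would replace $z_j$ by $w_j$ and pull $\prod_{j=1}^N\Psi_\omega(w_j)$ into the determinant column by column. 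Next, since $\Psi_\omega$ is invariant under $w\mapsto w^{-1}$, I would expand each entry in the orthogonal family $\{\,w^m-w^{-m}:m\in(\mathbb Z+\epsilon(X))_{>0}\,\}$, i.e. $\Psi_\omega(w)(w^l-w^{-l})=\sum_m\tfrac12 a_{l,m}(w^m-w^{-m})$ with
\[
a_{l,m}=\oint_{|z|=1}\Psi_\omega(z)\,(z^l-z^{-l})\,\overline{(z^m-z^{-m})}\,\frac{dz}{2\pi\mathrm i z},
\]
using the orthogonality relation $\oint_{|z|=1}(z^l-z^{-l})\overline{(z^m-z^{-m})}\frac{dz}{2\pi\mathrm i z}=2\delta_{l,m}$, valid for positive $l,m$ in a common coset of $\mathbb Z$ since all exponents occurring in that product are integers. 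Applying the Cauchy--Binet formula to the ensuing determinant of an $N\times\infty$ times $\infty\times N$ matrix product, and using the bijection $m_i=\mu_i+c(X)_{N-i+1}$ between strictly decreasing $N$-tuples $m_1>\dots>m_N$ in $(\mathbb Z+\epsilon(X))_{>0}$ and elements $\mu\in\mathbb S_N^+$, I would obtain, after cancelling the common factor $D(w_1,\dots,w_N)$,
\[
\prod_{j=1}^N\Psi_\omega(w_j)\;f^{X_N}_\lambda(w_1,\dots,w_N)=\sum_{\mu\in\mathbb S_N^+}\frac{\det[a_{l_i,m_j}]_{i,j=1}^N}{2^N}\;f^{X_N}_\mu(w_1,\dots,w_N),\qquad m_i:=\mu_i+c(X)_{N-i+1}.
\]
Dividing by $f^{X_N}_\lambda(q^{c(X)_1},\dots,q^{c(X)_N})$ and by $\prod_{j=1}^N\Psi_\omega(q^{c(X)_j})$, recalling $g^{X_N}_\lambda(z)=f^{X_N}_\lambda(w_1,\dots,w_N)/f^{X_N}_\lambda(q^{c(X)_1},\dots,q^{c(X)_N})$ and $\chi_{\omega,N}(\kappa_N(z))=\prod_i\Psi_\omega(q^{c(X)_i}z_i)/\Psi_\omega(q^{c(X)_i})$ from Lemma \ref{lem:goro}, and comparing with the characterization of $\mathcal P_{\chi_{\omega,N}}$ above, one reads off exactly the formula claimed in the theorem for $\mathbb L_{\chi_{\omega,N}}(\lambda,\mu)=\mathcal P_{\chi_{\omega,N}}(\lambda,\mu)-\delta_{\lambda,\mu}$.

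The main obstacle is not this algebra but the analytic justification of the above manipulations under the hypotheses of Lemma \ref{lem:goro}. The assumption that the Taylor series of $\Phi_\omega$ converges on an annulus containing $q^X_1,\dots,q^X_N$ is exactly what forces the Laurent expansion of $w\mapsto\Psi_\omega(w)=\Phi_\omega\bigl((w+w^{-1})/2\bigr)$ to converge on the circles $|w|=q^{\pm c(X)_j}$, and hence what legitimizes pulling $\Psi_\omega$ inside the determinant, the interchange of the (a priori infinite) Cauchy--Binet sum with the determinant, and the absolute convergence of the integrals $a_{l,m}$; spelling this out carefully is where the real work lies. A secondary, purely bookkeeping point arises when $X=B$, where $\epsilon(X)=1/2$ so that the individual functions $w^l-w^{-l}$ are not single-valued on $|z|=1$; one checks that every product and every determinant that actually occurs involves only integer exponents, so the orthogonality relation and the Cauchy--Binet step go through verbatim.
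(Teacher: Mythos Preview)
Your proposal is correct and follows essentially the same route as the paper. Both invoke \cite{Sato4} for the existence and general form of the generator, then extract the matrix entries via an Andreief/Cauchy--Binet determinantal identity; the only cosmetic difference is that the paper first passes through the Weyl integration formula (orthogonality of the characters $f^{X_N}_\nu$ against the measure $m_{a,b}$ on $\mathbb T^N$) to turn the tensor-multiplicity sum from \cite[Corollary~5.2]{Sato4} into the integral $\int_{\mathbb T^N}\prod_j\Psi_\omega(z_j)\,f_\lambda(z)\,\overline{f_\mu(z)}\,m_{a,b}(dz)$ before applying Andreief, whereas you expand $\prod_j\Psi_\omega(w_j)\,f^{X_N}_\lambda(w)$ directly in the basis $\{f^{X_N}_\mu\}$ via Cauchy--Binet.
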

\begin{proof}
By \cite[Corollary 5.2]{Sato4}, there is a Markov semigroup on $\mathbb{S}^+_N$ with the generator $\mathbb{L}$ given by
\[\mathbb{L}(\lambda, \mu)=\frac{f_\mu(q^{c(X)_1},\dots, q^{c(X)_N})}{f_\lambda(q^{c(X)_1},\dots, q^{c(X)_N})}\sum_{\nu\in\mathbb{S}^+_N}\frac{P_{\chi_{\omega,N}}(\nu)N_{\lambda,\nu}^\mu}{f_\nu(q^{c(X)_1},\dots, q^{c(X)_N})}-\delta_{\lambda, \mu}\]
for any $\lambda, \mu\in\mathbb{S}^+_N$, where $N^\mu_{\lambda, \nu}$ is the multiplicity of the irreducible representation associated with $\mu$ in the tensor product of two irreducible representations associated with $\lambda$ and $\nu$. Then, by the Weyl integration formula (see, e.g. \cite{FH:book}), we have
\begin{align*}
\mathbb{L}(\lambda,\mu)+\delta_{\lambda, \mu}
=\frac{f_\mu(q^{c(X)_1},\dots, q^{c(X)_N})}{f_\lambda(q^{c(X)_1},\dots, q^{c(X)_N})}\int_{\mathbb{T}^N}\prod_{j=1}^N\frac{\Psi_\omega(z_j)}{\Psi_\omega(q^{c(X)_j})}f_\lambda(z_1,\dots, z_N)\overline{f_\mu(z_1,\dots, z_N)}m_{a,b}(dz),
\end{align*}
where 
\[m_{a, b}(dz)=\frac{1}{N!2^N}\prod_{1\leq i<j\leq N}|z_i-z_j|^2|1-z_iz_j|^2\prod_{j=1}^N|1-z_j|^{2a+1}|1+z_j|^{2b+1}\frac{dz_j}{2\pi\mathrm{i}z_j}.\]
Moreover, we have
\begin{align*}
&\int_{\mathbb{T}^N}\prod_{j=1}^N\Psi_\omega(z_j)f_\lambda(z_1,\dots, z_N)\overline{f_\mu(z_1,\dots, z_N)}m_{a, b}(dz)\\
&=\frac{1}{N!2^N}\int_{\mathbb{T}^N}\prod_{j=1}^N\Psi_\omega(z_j)\det[z_j^{l_i}-z_j^{-l_i}]_{i, j=1}^N\overline{\det[z_j^{m_i}-z_j^{-m_i}]_{i, j=1}^N}\prod_{j=1}^N\frac{dz_j}{2\pi\mathrm{i}z_j}\\
&=\frac{1}{2^N}\det\left[\oint_{|z|=1}\Psi_\omega(z)(z^{l_i}-z_j^{-l_i})\overline{(z^{m_j}-z^{-m_j})}\frac{dz}{2\pi\mathrm{i}z}\right]_{i, j=1}^N.
\end{align*}
Therefore, we have $\mathbb{L}_{\chi_{\omega, N}}=\mathbb{L}$.
\end{proof}

We recall that there exists a homeomorphism from $\mathcal{M}_p(\mathfrak{Y})$ to $\varprojlim_N (\mathcal{M}_p(\mathbb{S}^+_N), {}_q\Lambda^N_{N-1})$ by Corollary \ref{cor:mado}. Thus, for any $N\geq1$ there exists a mapping ${}_q\Lambda^\infty_N\colon\mathcal{M}_p(\mathfrak{Y})\to\mathcal{M}_p(\mathbb{S}^+_N)$ such that the homeomorphism is given as $P\in\mathcal{M}_p(\mathfrak{Y})\mapsto (P{}_q\Lambda^\infty_N)_N\varprojlim_N (\mathcal{M}_p(\mathbb{S}^+_N), {}_q\Lambda^N_{N-1})$.

\begin{corollary}
Let $\omega\in\Omega$ satisfying the assumption in Proposition \ref{prop:char_infty}. Let $(Q^{\chi_{\omega, N}}_t)_{t\geq0}$ be the Markov semigroup on $\mathbb{S}^+_N$ in Theorem \ref{thm:markov}. Then there exists a unique Markov semigroup $(Q^{\chi_\omega}_t)_{t\geq0}$ on $\mathfrak{Y}$ such that $Q^{\chi_\omega}_t{}_q\Lambda^\infty_N={}_q\Lambda^\infty_NQ^{\chi_\omega, N}_t$ for any $N\geq1$ and $t\geq0$. 
\end{corollary}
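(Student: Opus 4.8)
The plan is to transfer the Markov semigroups $(Q^{\chi_{\omega,N}}_t)_{t\ge0}$ along the projective system to a semigroup on $\mathfrak{Y}$, using the compatibility of the generators with the links ${}_q\Lambda^N_{N-1}$. First I would verify the intertwining relation at the level of $N$, namely that for all $N\ge 2$ and $t\ge0$
\[
Q^{\chi_{\omega, N}}_t\, {}_q\Lambda^N_{N-1} = {}_q\Lambda^N_{N-1}\, Q^{\chi_{\omega, N-1}}_t,
\]
viewing ${}_q\Lambda^N_{N-1}$ as a stochastic matrix $\mathbb{S}^+_N\times\mathbb{S}^+_{N-1}$. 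The key input is that the coherent family $(\chi_{\omega,N})_N$ is compatible, i.e. $\chi_{\omega, N+1}\Theta_N = \chi_{\omega, N}$ by Lemma~\ref{lem:yuki} and Proposition~\ref{prop:char_infty} (equivalently $P_{\chi_{\omega,N}}\,{}_q\Lambda^N_{N-1}=P_{\chi_{\omega,N-1}}$), together with the branching coherence of the restriction functors. Concretely, the generator $\mathbb{L}_{\chi_{\omega, N}}$ in Theorem~\ref{thm:markov} is built, via \cite[Corollary 5.2]{Sato4}, from the data $(P_{\chi_{\omega, N}}, N^\mu_{\lambda,\nu})$, and the general construction in \cite{Sato4} is precisely designed so that these generators commute with the natural links; I would invoke that general consistency statement rather than recompute. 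Exponentiating, $Q^{\chi_{\omega,N}}_t = e^{t\mathbb{L}_{\chi_{\omega,N}}}$ then satisfies the displayed intertwining for every $t\ge0$.

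Next I would push the intertwining all the way up. Since Corollary~\ref{cor:mado} gives the homeomorphism $\mathcal{M}_p(\mathfrak{Y})\cong\varprojlim_N(\mathcal{M}_p(\mathbb{S}^+_N),{}_q\Lambda^N_{N-1})$ realized by $P\mapsto (P\,{}_q\Lambda^\infty_N)_N$, and the maps ${}_q\Lambda^\infty_N$ satisfy ${}_q\Lambda^\infty_N\,{}_q\Lambda^N_{N-1} = {}_q\Lambda^\infty_{N-1}$, I would \emph{define} the candidate semigroup on $\mathfrak{Y}$ by requiring
\[
Q^{\chi_\omega}_t\, {}_q\Lambda^\infty_N = {}_q\Lambda^\infty_N\, Q^{\chi_{\omega, N}}_t \qquad (N\ge1,\ t\ge0).
\]
To see this determines a well-defined Markov operator on $\mathcal{M}_p(\mathfrak{Y})$: given $P\in\mathcal{M}_p(\mathfrak{Y})$, the sequence $\big({}_q\Lambda^\infty_N Q^{\chi_{\omega,N}}_t\,\ast\,P\big)_N$ — more precisely $\big(P\,{}_q\Lambda^\infty_N\,Q^{\chi_{\omega,N}}_t\big)_N$ read as measures — is coherent by the $N$-level intertwining just established, hence corresponds under the homeomorphism to a unique element $Q^{\chi_\omega}_t\ast P\in\mathcal{M}_p(\mathfrak{Y})$. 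Linearity and positivity are inherited from the $Q^{\chi_{\omega,N}}_t$, and mass preservation from stochasticity of the links and of each $Q^{\chi_{\omega,N}}_t$, so $Q^{\chi_\omega}_t$ is Markov. The semigroup law $Q^{\chi_\omega}_{s+t}=Q^{\chi_\omega}_s Q^{\chi_\omega}_t$ and $Q^{\chi_\omega}_0=\mathrm{id}$ follow by applying the defining relation twice and using injectivity of $P\mapsto(P\,{}_q\Lambda^\infty_N)_N$; uniqueness follows likewise, since any semigroup satisfying the intertwining is forced, evaluated against each ${}_q\Lambda^\infty_N$, to agree with $Q^{\chi_\omega}_t$ on a separating family.

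Finally I would address continuity/measurability: one should check $(Q^{\chi_\omega}_t)_{t\ge0}$ is a genuine (say, weakly continuous, or Feller) Markov semigroup and not merely a family of Markov operators. This reduces to the corresponding property of each $Q^{\chi_{\omega,N}}_t$ — which holds since $\mathbb{L}_{\chi_{\omega,N}}$ in Theorem~\ref{thm:markov} is a conservative $q$-matrix on the countable set $\mathbb{S}^+_N$ generating a standard transition semigroup — combined with the fact that the topology on $\mathcal{M}_p(\mathfrak{Y})\cong\varprojlim_N\mathcal{M}_p(\mathbb{S}^+_N)$ is the projective-limit (component-wise weak) topology, under which $t\mapsto Q^{\chi_\omega}_t\ast P$ is continuous iff each $t\mapsto P\,{}_q\Lambda^\infty_N Q^{\chi_{\omega,N}}_t$ is. The main obstacle I anticipate is the bookkeeping in the very first step: making fully precise that the generators of Theorem~\ref{thm:markov} are \emph{exactly} compatible with the links ${}_q\Lambda^N_{N-1}$ — i.e. that the commutation $\mathbb{L}_{\chi_{\omega,N}}\,{}_q\Lambda^N_{N-1}={}_q\Lambda^N_{N-1}\,\mathbb{L}_{\chi_{\omega,N-1}}$ holds and not merely an approximate version — which hinges on the coherence of $(P_{\chi_{\omega,N}})_N$ and on the branching identities relating the multiplicities $N^\mu_{\lambda,\nu}$ at successive ranks with the coefficients of ${}_q\Lambda^N_{N-1}$; once that commutation is in hand, everything else is soft functoriality of projective limits.
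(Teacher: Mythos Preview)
Your proposal is correct and follows essentially the same route as the paper: the paper's proof simply cites \cite[Proposition 8.1]{Sato4} for the intertwining $Q^{\chi_{\omega, N}}_t\,{}_q\Lambda^N_{N-1}={}_q\Lambda^N_{N-1}\,Q^{\chi_{\omega, N-1}}_t$ and then \cite[Proposition 2.4]{BO} for the existence and uniqueness of the limiting semigroup on $\mathfrak{Y}$, which is exactly the two-step argument you spell out in detail. Your anticipated ``main obstacle'' (the exact commutation of generators with the links) is precisely what \cite[Proposition 8.1]{Sato4} provides, so there is no gap.
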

\begin{proof}
By \cite[Proposition 8.1]{Sato4}, we have $\Lambda^N_{N-1}Q^{\chi_{\omega, N-1}}_t=Q^{\chi_{\omega, N}}\Lambda^N_{N-1}$ for any $N$ and $t\geq0$. Thus, the statement follows from \cite[Proposition 2.4]{BO}. 
\end{proof}

\begin{remark}
For any $\omega\in\Omega$ satisfying the assumption in Proposition \ref{prop:char_infty}, we can also construct the Markov semigroup $(Q^{\chi_\omega}_t)_{t\geq0}$ directly using the quantized character $\chi_\omega\in\mathrm{Ch}(G_q(\infty))$. See \cite[Definition 8.4]{Sato4}.
\end{remark}

At the end of the paper, we give a simple example of Markov semigroup on $\mathbb{S}^+_N$.
\begin{example}
Let $\gamma>0$ and $\alpha=\beta=(0,0,\dots)$. Then $\omega=(\alpha, \beta, \gamma)\in\Omega$ satisfies the assumption in Lemma \ref{lem:goro} for any $N$ since $\Psi_\omega(z)=e^{-\gamma}e^{\gamma(z+z^{-1})/2}$. Then for any integer $m\in\mathbb{Z}$ we have
\[\oint_{|z|=1}\Psi_\omega(z)z^{-m}\frac{dz}{2\pi\mathrm{i}z}=e^{-\gamma}I_m(\gamma),\]
where $I_m$ is the modified Bessel function of the first kind. Then, by Theorem \ref{thm:markov}, we obtain a Markov semigroup on $\mathbb{S}^+_N$ with the generator $\mathbb{L}_\gamma$ given by
\[\mathbb{L}_\gamma(\lambda, \mu)=\frac{f_\mu(q^{c(X)_1},\dots, q^{c(X)_N})}{f_\lambda(q^{c(X)_1},\dots, q^{c(X)_N})}\frac{\det\left[I_{-l_i+m_j}(\gamma)+I_{l_i-m_j}(\gamma)-I_{l_i+m_j}(\gamma)-I_{-l_i-m_j}(\gamma)\right]_{i, j=1}^N}{2^Ne^{\gamma N}\prod_{j=1}^N\Psi_\omega(q^{c(X_N)_1})}-\delta_{\lambda, \mu}\]
for any $\lambda, \mu\in\mathbb{S}^+_N$, where $l_i:=\lambda_i+c(X)_{N-i+1}$ and $m_i:=\mu_i+c(X)_{N-i+1}$ for $i=1,\dots, N$.
\end{example}

\section*{Acknowledgment}
The author gratefully acknowledges the valuable comments from his supervisor, Professor Yoshimichi Ueda. This work was supported by JSPS Research Fellowship for Young Scientists (KAKENHI Grant Number JP 19J21098).
}


\end{document}